\DeclareMathAlphabet{\mathcal}{OMS}{cmsy}{m}{n}
\def\ps@pprintTitle{%
 \let\@oddhead\@empty
 \let\@evenhead\@empty
 \def\@oddfoot{\centerline{\thepage}}%
 \let\@evenfoot\@oddfoot}
\newcommand{\bbC}{\mathbb{C}}
\newcommand{\bbF}{\mathbb{F}}
\newcommand{\bbR}{\mathbb{R}}
\newcommand{\bbZ}{\mathbb{Z}}
\newcommand{\bfA}{\mathbf{A}}
\newcommand{\bfB}{\mathbf{B}}
\newcommand{\bfE}{\mathbf{E}}
\newcommand{\bfF}{\mathbf{F}}
\newcommand{\bfg}{\mathbf{g}}
\newcommand{\bfG}{\mathbf{G}}
\newcommand{\bfH}{\mathbf{H}}
\newcommand{\bfI}{\mathbf{I}}
\newcommand{\bfJ}{\mathbf{J}}
\newcommand{\bfX}{\mathbf{X}}
\newcommand{\bfz}{\mathbf{z}}
\newcommand{\bfZ}{\mathbf{Z}}
\newcommand{\bff}{\mathbf{f}}
\newcommand{\bfone}{\boldsymbol{1}}
\newcommand{\bfzero}{\boldsymbol{0}}
\newcommand{\bfphi}{\boldsymbol{\varphi}}
\newcommand{\bfPi}{\boldsymbol{\Pi}}
\newcommand{\bfpsi}{\boldsymbol{\psi}}
\newcommand{\bfPhi}{\boldsymbol{\Phi}}
\newcommand{\bfPsi}{\boldsymbol{\Psi}}
\newcommand{\calD}{\mathcal{D}}
\newcommand{\calG}{\mathcal{G}}
\newcommand{\calV}{\mathcal{V}}
\newcommand{\rmi}{\mathrm{i}}
\newcommand{\rmT}{\mathrm{T}}
\newcommand{\Tr}{\operatorname{Tr}}
\newcommand{\QSD}{{\operatorname{QSD}}}
\newcommand{\SRG}{{\operatorname{SRG}}}
\newcommand{\BIBD}{{\operatorname{BIBD}}}
\newcommand{\RBIBD}{{\operatorname{RBIBD}}}
\newcommand{\Fro}{\mathrm{Fro}}
\newcommand{\abs}[1]{|{#1}|}
\newcommand{\bigbracket}[1]{\bigl[{#1}\bigr]}
\newcommand{\Bigbracket}[1]{\Bigl[{#1}\Bigr]}
\newcommand{\biggbracket}[1]{\biggl[{#1}\biggr]}
\newcommand{\set}[1]{\{{#1}\}}
\newcommand{\norm}[1]{\|{#1}\|}
\newcommand{\ip}[2]{\langle{#1},{#2}\rangle}
\newtheorem{theorem}{Theorem}
\newtheorem{proposition}{Proposition}
\newtheorem{corollary}{Corollary}
\theoremstyle{definition}
\newtheorem{definition}{Definition}
\newtheorem{conjecture}{Conjecture}
\begin{document}
\begin{frontmatter}
\title{Hadamard Equiangular Tight Frames}

\author[AFIT]{Matthew Fickus}
\ead{Matthew.Fickus@gmail.com}
\author[Cincy]{John Jasper}
\author[AFIT]{Dustin G.\ Mixon}
\author[AFIT]{Jesse D.\ Peterson}

\address[AFIT]{Department of Mathematics and Statistics, Air Force Institute of Technology, Wright-Patterson AFB, OH 45433}
\address[Cincy]{Department of Mathematical Sciences, University of Cincinnati, Cincinnati, OH 45221}


\begin{abstract}
An equiangular tight frame (ETF) is a type of optimal packing of lines in Euclidean space.
They are often represented as the columns of a short, fat matrix.
In certain applications we want this matrix to be flat, that is, have the property that all of its entries have modulus one.
In particular, real flat ETFs are equivalent to self-complementary binary codes that achieve the Grey-Rankin bound.
Some flat ETFs are (complex) Hadamard ETFs, meaning they arise by extracting rows from a (complex) Hadamard matrix.
These include harmonic ETFs, which are obtained by extracting the rows of a character table that correspond to a difference set in the underlying finite abelian group.
In this paper, we give some new results about flat ETFs.
One of these results gives an explicit Naimark complement for all Steiner ETFs,
which in turn implies that all Kirkman ETFs are possibly-complex Hadamard ETFs.
This in particular produces a new infinite family of real flat ETFs.
Another result establishes an equivalence between real flat ETFs and certain types of quasi-symmetric designs,
resulting in a new infinite family of such designs.
\end{abstract}

\begin{keyword}
Hadamard \sep flat \sep equiangular \sep tight \sep frame  \MSC[2010] 42C15
\end{keyword}
\end{frontmatter}

\section{Introduction}

An equiangular tight frame is a type of optimal packing of lines in Euclidean space.
To be precise, if $\set{\bfphi_j}_{j=1}^{n}$ is any sequence of nonzero, equal-norm vectors in $\bbF^d$ where $n\geq d$ and $\bbF$ is either $\bbR$ or $\bbC$, the \textit{coherence} of $\set{\bfphi_j}_{j=1}^{n}$ is bounded below by the \textit{Welch bound}~\cite{Welch74}:
\begin{equation}
\label{equtaion.Welch bound}
\max_{j\neq j'}\frac{\abs{\ip{\bfphi_j}{\bfphi_{j'}}}}{\norm{\bfphi_j}\norm{\bfphi_{j'}}}
\geq\biggbracket{\frac{n-d}{d(n-1)}}^{\frac12}.
\end{equation}
In the case where $\bbF=\bbR$,
each vector $\bfphi_j$ spans a real line,
and the coherence is the cosine of the smallest interior angle between any pair of these lines.
In this case, if equality in~\eqref{equtaion.Welch bound} is achieved then this smallest pairwise angle is as large as possible, meaning the lines are optimally packed.

For any vectors \smash{$\set{\bfphi_j}_{j=1}^{n}$} in $\bbF^d$,
the corresponding \textit{synthesis operator} $\bfPhi$ is the $d\times n$ matrix whose $j$th column is \smash{$\bfphi_j$}.
It is well known~\cite{StrohmerH03} that a sequence of nonzero equal-norm vectors \smash{$\set{\bfphi_j}_{j=1}^{n}$} in $\bbF^d$ achieves equality in~\eqref{equtaion.Welch bound} if and only if they form an \textit{equiangular tight frame} (ETF) for $\bbF^d$, that is, if and only if the rows of its synthesis operator $\bfPhi$ are equal-norm and orthogonal (tightness) while $\abs{\ip{\bfphi_j}{\bfphi_{j'}}}$ is constant over all $j\neq j'$ (equiangularity).
These conditions are restrictive, and ETFs are not easy to find.
That said, a growing number of explicit constructions of them are known~\cite{FickusM16}.
Real ETFs in particular are known to be equivalent to a special class of \textit{strongly regular graphs} (SRGs)~\cite{StrohmerH03,HolmesP04,Waldron09}.
Beyond these, the most popular ETFs are \textit{harmonic ETFs}~\cite{StrohmerH03,XiaZG05,DingF07}, which are obtained by restricting the characters of a finite abelian group $\calG$ to a \textit{difference set}, namely a subset $\calD$ of $\calG$ with the property that the cardinality of $\set{(i,i')\in\calD\times\calD: g=i-i'}$ is constant over all nonzero $g\in\calG$.
For example, $\set{0001,0101,0010,1010,0011,1111}$ is a difference set in $\bbZ_2^4$,
and the corresponding six rows of the canonical Hadamard matrix of size $16$ yields the following matrix whose columns give an optimal packing of $16$ lines in $\bbR^6$:
\begin{equation}
\label{equation.6x16 flat}
\bfPhi
=\left[\begin{array}{rrrrrrrrrrrrrrrr}
 1&-1& 1&-1& 1&-1& 1&-1& 1&-1& 1&-1& 1&-1& 1&-1\\
 1& 1&-1&-1& 1& 1&-1&-1& 1& 1&-1&-1& 1& 1&-1&-1\\
 1&-1&-1& 1& 1&-1&-1& 1& 1&-1&-1& 1& 1&-1&-1& 1\\
 1&-1& 1&-1&-1& 1&-1& 1& 1&-1& 1&-1&-1& 1&-1& 1\\
 1& 1&-1&-1& 1& 1&-1&-1&-1&-1& 1& 1&-1&-1& 1& 1\\
 1&-1&-1& 1&-1& 1& 1&-1&-1& 1& 1&-1& 1&-1&-1& 1
\end{array}\right].
\end{equation}
Every harmonic ETF is \textit{unital}, meaning each entry of its synthesis operator $\bfPhi$ is a root of unity;
number-theoretic necessary conditions on the existence of unital ETFs are given in~\cite{SustikTDH07}.
Every harmonic ETF also has the following two special properties:

\begin{definition}
\label{definition.flat}
An ETF $\set{\bfphi_j}_{j=1}^{n}$ for $\bbF^d$ is \textit{(complex) Hadamard} if its $d\times n$ synthesis operator $\bfPhi$ is a submatrix of an $n\times n$ (complex) Hadamard matrix,
and is \textit{flat} if each entry of $\bfPhi$ is unit modulus.
\end{definition}

In this paper, we study Hadamard ETFs and flat ETFs in general.
Flat ETFs in particular arise in several applications, as detailed below.
Because of this, we would like to have many ways to construct them.
However, to date we only have two ways: besides harmonic ETFs, the only known flat ETFs are \textit{Kirkman ETFs}~\cite{JasperMF14}, which are a special class of \textit{Steiner ETFs}~\cite{FickusMT12}.
In this paper,
we give some new results about flat ETFs which we hope will better inform future searches for them.
In particular, we show that every Kirkman ETF is a possibly-complex Hadamard ETF
(Theorem~\ref{theorem.Naimark complement of Kirkman ETF}) and give a new characterization of all real flat ETFs (Theorem~\ref{theorem.QSD}).

Flat ETFs are especially attractive for certain applications involving coding theory and waveform design.
In particular, real flat ETFs are equivalent to self-complementary binary codes whose minimum-pairwise-Hamming distance is as large as possible, achieving the \textit{Grey-Rankin bound}~\cite{JasperMF14}.
More generally, ETFs have been proposed as waveforms for wireless communication~\cite{StrohmerH03}.
In that setting, flat ETFs allow the transmitted signals to have maximal energy ($2$-norm) subject to real-world bounds on transmitter power ($\infty$-norm), while still interfering with each other as little as possible.
This same rationale is part of the reason why Hadamard matrices are used in traditional CDMA,
and why \textit{constant amplitude zero-autocorrelation} waveforms have been proposed as radar waveforms~\cite{BenedettoD07}.
It also helped motivate the investigation into unital ETFs given in~\cite{SustikTDH07};
a real ETF is flat if and only if it is unital.
Recently, certain complex flat ETFs also have been used to construct tight frames for $\bbC^d$ that  consist of $d^2+1$ vectors and have minimal coherence, meeting the \textit{orthoplex bound}~\cite{BodmannH16}.
These frames present a reasonable alternative to ETFs for $\bbC^d$ that consist of $d^2$ vectors;
though such ETFs have been much sought after in quantum information theory, their existence remains unsettled in all but a finite number of cases~\cite{RenesBSC04,Zauner99}.
Real flat sensing matrices with low coherence also arise in certain compressed sensing applications like the Single Pixel Camera~\cite{DuarteDTLSKB08},
though so far, all known real flat ETFs with $n-1>d>1$ are known to be inferior to random $\set{\pm1}$-valued matrices with respect to the \textit{restricted isometry property}~\cite{BandeiraFMW13,JasperMF14}.

In the next section, we establish notation and discuss some well-known concepts that we need, such as \textit{Naimark complements} of ETFs.
In Section~3,
we construct an explicit Naimark complement of any Steiner ETF,
and use it to show that every Kirkman ETF is a possibly-complex Hadamard ETF.
This in particular yields a new infinite family of real flat ETFs.
In Section~4,
we characterize all real flat ETFs in terms of combinatorial designs known as \textit{quasi-symmetric designs} (QSDs).
When combined with the results of Section~3, this characterization provides a new infinite family of QSDs.
When combined with~\cite{BrackenMW06}, this characterization shows that there exists a real flat ETF when $(d,n)$ is either $(66,144)$ or $(78,144)$.
This characterization also leads to new necessary conditions on the existence of real flat ETFs,
showing for example that real ETFs with $(d,n)=(15,36)$ are not flat.
In the fifth and final section, we give some other miscellaneous results about Hadamard ETFs.

One may also consider flat representations of higher dimension,
such as when the columns of an $m\times n$ flat matrix $\bfPhi$ form an ETF for their $d$-dimensional span where $d<m$.
Such ETFs have arisen only recently~\cite{FickusMJ16,FickusJMP18}, and we leave a more thorough investigation of them for future work.
Preliminary versions of parts of the material presented in Sections~3 and~4 appeared in the conference proceedings~\cite{FickusJMP15a} and~\cite{FickusJMP15b}, respectively.

\section{Preliminaries}

As above, let $\bfPhi$ denote the $d\times n$ synthesis operator of \smash{$\set{\bfphi_j}_{j=1}^{n}$},
and let $\bfPhi^*$ be its $n\times d$ conjugate transpose.
The corresponding \textit{Gram matrix} is the $n\times n$ matrix $\bfPhi^*\bfPhi$ whose $(j,j')$th entry is $(\bfPhi^*\bfPhi)(j,j')=\ip{\bfphi_j}{\bfphi_{j'}}$.
We say \smash{$\set{\bfphi_j}_{j=1}^{n}$} is \textit{equal-norm} if there exists $\beta>0$ such that $\norm{\bfphi_j}^2=\beta$ for all $j$,
and say it is \textit{equiangular} if we further have $\gamma\geq0$ such that $\abs{\ip{\bfphi_j}{\bfphi_{j'}}}=\gamma$ for all $j\neq j'$.
That is, \smash{$\set{\bfphi_j}_{j=1}^{n}$} is equiangular when the diagonal entries of $\bfPhi^*\bfPhi$ are constant and its off-diagonal entries have constant modulus.
We say \smash{$\set{\bfphi_j}_{j=1}^{n}$} is a \textit{tight frame} for $\bbF^d$ if there exists $\alpha>0$ such that $\bfPhi\bfPhi^*=\alpha\bfI$,
namely when the rows of $\bfPhi$ are nonzero, equal-norm and orthogonal.
As mentioned above, it is well known that a sequence of equal-norm vectors \smash{$\set{\bfphi_j}_{j=1}^{n}$} achieves equality in~\eqref{equtaion.Welch bound} if and only if it is both equiangular and a tight frame for $\bbF^d$, namely when it is an ETF for $\bbF^d$;
see~\cite{JasperMF14} for a short, modern proof of this fact.
In this case, the tightness constant $\alpha$ is necessarily $\frac{n\beta}{d}$,
and the coherence $\frac{\gamma}{\beta}$ is necessarily the Welch bound \smash{$[\tfrac{n-d}{d(n-1)}]^{\frac12}$}.

For any ETF $\set{\bfphi_j}_{j=1}^{n}$ for $\bbF^d$ with $n>d$,
there exists an ETF \smash{$\set{\widetilde{\bfphi}_j}_{j=1}^{n}$} for $\bbF^{n-d}$.
Here, the $n-d$ rows of the corresponding synthesis operator \smash{$\widetilde{\bfPhi}$} are formed by completing the $d$ rows of $\bfPhi$ to an equal-norm orthogonal basis for $\bbF^n$.
This ensures that
\begin{equation*}
\alpha^{-\frac12}\left[\begin{array}{cc}\bfPhi\\\widetilde{\bfPhi}\end{array}\right]
\end{equation*}
is unitary, implying that $\widetilde{\bfPhi}\widetilde{\bfPhi}^*=\alpha\bfI$ and that $\widetilde{\bfPhi}^*\widetilde{\bfPhi}=\alpha\bfI-\bfPhi^*\bfPhi$,
meaning that the columns $\set{\widetilde{\bfphi}_j}_{j=1}^{n}$ of $\widetilde{\bfPhi}$ form a tight frame for $\bbF^{n-d}$ with the property that $\norm{\widetilde{\bfphi}_j}^2=\alpha-\beta$ for all $j$ and that
\smash{$\abs{\ip{\widetilde{\bfphi}_j}{\widetilde{\bfphi}_{j'}}}=\abs{-\ip{\bfphi_j}{\bfphi_{j'}}}=\gamma$} for all \smash{$j\neq j'$}.
Any such sequence \smash{$\set{\widetilde{\bfphi}_j}_{j=1}^{n}$} is called a \textit{Naimark complement} for $\bbF^{n-d}$.
Since the rows of $\widetilde{\bfPhi}$ can be any appropriately-scaled orthogonal basis for the orthogonal complement of the row space of $\bfPhi$,
Naimark complements are not unique.
This is a key point in distinguishing Hadamard ETFs from those that are simply flat,
as expressed in the following restatement of Definition~\ref{definition.flat},
whose proof is immediate:

\begin{proposition}
\label{proposition.Hadamard ETF characterization}
A real flat ETF is Hadamard if and only if it has a real flat Naimark complement.
A complex flat ETF is complex Hadamard if and only if it has a flat Naimark complement.
\end{proposition}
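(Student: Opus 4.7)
The plan is to prove both directions by passing between the stacked matrix
\begin{equation*}
\bfH=\left[\begin{array}{c}\bfPhi\\\widetilde{\bfPhi}\end{array}\right]
\end{equation*}
and its two blocks, checking that flatness plus the Naimark construction is exactly the Hadamard condition. The first preliminary observation I would record is that for a flat ETF each column of $\bfPhi$ has squared norm $\beta=d$, so the tightness constant computed in the preceding paragraph of the paper is $\alpha=\tfrac{n\beta}{d}=n$. Any Naimark complement therefore satisfies $\widetilde{\bfPhi}\widetilde{\bfPhi}^*=n\bfI_{n-d}$, while each of its rows (in the flat case) has squared norm $n$, matching the row-norms of an $n\times n$ (complex) Hadamard matrix.

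For the forward direction, if $\bfPhi$ is a (possibly complex) Hadamard ETF, then by Definition~\ref{definition.flat} its rows are $d$ of the $n$ rows of some $n\times n$ (complex) Hadamard matrix $\bfH$. Let $\widetilde{\bfPhi}$ be the remaining $n-d$ rows. The identity $\bfH\bfH^*=n\bfI_n$ says precisely that the rows of $\widetilde{\bfPhi}$ are orthogonal to one another and to the rows of $\bfPhi$, each with squared norm $n$; this is exactly the construction of a Naimark complement from the paragraph preceding the proposition. Since $\widetilde{\bfPhi}$ has unit-modulus entries, it is flat. Conversely, if $\bfPhi$ is flat and some Naimark complement $\widetilde{\bfPhi}$ is also flat, then $\bfH$ above has unit-modulus entries, and combining $\bfPhi\bfPhi^*=n\bfI_d$, $\widetilde{\bfPhi}\widetilde{\bfPhi}^*=n\bfI_{n-d}$, and the orthogonality of the two row-spaces (baked into the definition of Naimark complement) gives $\bfH\bfH^*=n\bfI_n$, so $\bfH$ is a complex Hadamard matrix of which $\bfPhi$ is a submatrix. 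In the real case, flatness is the same as $\pm1$-valuedness, so the same stacking/unstacking argument produces a real Hadamard matrix. The only potential subtlety, and thus the one thing I would double-check, is the norm bookkeeping—that flat rows automatically have the squared norm $n$ demanded by the Hadamard identity—but as noted above this follows at once from $\alpha=n$, which is why the authors call the proof immediate.
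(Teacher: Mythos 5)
Your proof is correct and is precisely the argument the paper declares ``immediate'': stacking $\bfPhi$ with the remaining (or given) flat rows and matching the identity $\bfH\bfH^*=n\bfI_n$ against the definition of a Naimark complement. Your norm bookkeeping via $\beta=d$, hence $\alpha=n$, is exactly the detail that makes the equivalence work, so nothing is missing.
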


For a simple example of these ideas, consider \textit{regular simplices},
namely ETFs for $\bbF^d$ that consist of $d+1$ vectors.
The Naimark complement of a regular simplex is an ETF for $\bbF^1$,
namely a sequence of $d+1$ nonzero scalars of equal modulus.
Conversely, any such sequence is an ETF for $\bbF^1$,
meaning each of its Naimark complements is a regular simplex.
In particular, for any positive integer $d$, there exists a regular simplex for $\bbF^d$.
When $\bbF=\bbC$, this ETF can always be chosen to be flat:
we can form its synthesis operator by removing any row from a complex Hadamard matrix of size $d+1$,
such as a discrete Fourier transform (DFT).
This same method produces a flat regular simplex for $\bbR^d$ if there exists a Hadamard matrix of size $d+1$.
For example, removing the all-ones row from the canonical $4\times 4$ Hadamard matrix gives a flat regular simplex for $\bbR^3$ (a tetrahedron):
\begin{equation}
\label{equation.3x4 real flat regular simplex}
\left[\begin{array}{rrrr}
 1&-1& 1&-1\\
 1& 1&-1&-1\\
 1&-1&-1& 1\\
\end{array}\right].
\end{equation}
In fact, since the Naimark complement of a regular simplex is automatically flat,
this is the only way to produce a real flat regular simplex,
meaning each such ETF is Hadamard.
In particular, by the known half of the Hadamard conjecture,
a real flat regular simplex for $\bbR^d$ can only exist if $d+1$ is either $2$ or divisible by $4$.

For another example of these ideas, note that every harmonic ETF is possibly-complex Hadamard,
being a $d\times n$ submatrix of an $n\times n$ character table of some finite abelian group.
In this case, the remaining $n-d$ rows of the character table give a flat Naimark complement.
Some of these ETFs are real,
such as those that arise from a \textit{McFarland} difference set in \smash{$\bbZ_2^{2(e+1)}$} where $e$ is a positive integer~\cite{DingF07,JasperMF14}.
These ETFs have parameters \smash{$d=\frac12(n-\sqrt{n})$} where $n=2^{2(e+1)}$;
taking $e=1$ gives the real flat ETF given in~\eqref{equation.6x16 flat}.
By Corollary~2 of~\cite{JasperMF14}, every real harmonic ETF is either a regular simplex or has these parameters.

Thus, the two most popular constructions of flat ETFs---regular simplices and harmonic ETFs---also happen to be possibly-complex Hadamard.
This leads one to ask whether this is true in general:

\begin{conjecture}
Every real flat ETF is Hadamard.
\end{conjecture}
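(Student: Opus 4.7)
My plan is to apply Proposition~\ref{proposition.Hadamard ETF characterization}, so that the goal becomes to show every real flat ETF $\bfPhi$ admits a real flat Naimark complement. Writing $V\subseteq\bbR^n$ for the $d$-dimensional row space of $\bfPhi$, this amounts to producing $n-d$ mutually orthogonal vectors in $\set{\pm1}^n\cap V^\perp$, each of squared norm $n$ --- equivalently, to extending $\bfPhi$ to an $n\times n$ Hadamard matrix by appending $n-d$ suitable $\pm1$ rows.

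Basic reductions come first. The cases $d\leq 2$ and the regular-simplex case $n=d+1$ have already been settled in the excerpt. Otherwise $d\geq 3$, and the mutual orthogonality of any three $\pm1$ rows forces $4\mid n$. The natural first vector of a flat Naimark complement is the all-ones vector $\bfone_n$, which lies in $V^\perp$ exactly when every row of $\bfPhi$ has zero sum. I would first try to show this always holds (or handle the failure mode in which $\bfone_n\in V$), then iterate: assuming $k$ mutually orthogonal $\pm1$ vectors have been placed in $V^\perp$, seek a further $\mathbf{v}\in\set{\pm1}^n\cap V^\perp$ orthogonal to all $k$.

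To produce such $\mathbf{v}$, I would draw on the two combinatorial incarnations of real flat ETFs: their equivalence with self-complementary binary codes meeting the Grey-Rankin bound~\cite{JasperMF14}, and their characterization via quasi-symmetric designs given in Theorem~\ref{theorem.QSD}. In the code picture, $\set{\pm1}^n\cap V^\perp$ is the sign version of a dual code whose weight distribution is strongly constrained by the Grey-Rankin bound; one hopes these constraints force enough codewords to exist. In the QSD picture, the incidence relations of the design should yield $\pm1$ vectors in $V^\perp$ in the spirit of the Steiner-system argument that proves Theorem~\ref{theorem.Naimark complement of Kirkman ETF}.

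The main obstacle is precisely that no classification of real flat ETFs is presently available. A proof will therefore likely require either (i) a classification theorem forcing every real flat ETF into one of the known families --- regular simplices, McFarland harmonic ETFs, Kirkman ETFs, and their Naimark complements --- after which the conclusion follows from results already cited, or (ii) a uniform, classification-free production of the desired flat vectors in $V^\perp$. Approach (i) would need sharp parameter bounds beyond those given by Grey-Rankin or by Theorem~\ref{theorem.QSD}, while approach (ii) would likely need an algebraic input analogous to the resolvable-design structure exploited in the Kirkman case. The genuine difficulty in either direction is the production, at each stage, of a single $\pm1$ vector orthogonal to $V$ and to all previously chosen flat vectors; no general mechanism for this is in evidence.
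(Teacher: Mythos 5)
You have not proved the statement, and neither does the paper: this is Conjecture~1, which the authors explicitly leave open --- they even remark that it is reasonable to believe it is \emph{false}, and that the only evidence for it is that the known families (regular simplices, harmonic ETFs, and, via Theorem~\ref{theorem.Naimark complement of Kirkman ETF}, Kirkman ETFs) happen to admit flat Naimark complements. Your reduction via Proposition~\ref{proposition.Hadamard ETF characterization} is the correct and standard framing, and your survey of the available tools (the Grey--Rankin code equivalence of~\cite{JasperMF14}, the QSD characterization of Theorem~\ref{theorem.QSD}, the resolvable-design mechanism behind Theorem~\ref{theorem.Naimark complement of Kirkman ETF}) accurately maps the landscape. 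But the proposal's central step --- producing, for an \emph{arbitrary} real flat ETF, even one vector of $\set{\pm1}^n$ orthogonal to the row space $V$, let alone $n-d$ mutually orthogonal ones --- is precisely the open problem, and you concede as much in your final sentence. A plan whose key step is flagged as having ``no general mechanism in evidence'' is a research program, not a proof.

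Two specific steps of the plan would also fail as stated. First, the claim that $\bfone_n$ is the ``natural first vector'': $\bfone_n\in V^\perp$ holds exactly when every row of $\bfPhi$ has zero sum, which is not a property of general real flat ETFs and cannot in general be arranged by signing columns (column signings form a group far too small to force $d$ simultaneous zero-sum conditions); nor does a flat Naimark complement, if one exists, need to contain $\bfone_n$ among its rows. Second, the greedy iteration --- append one flat orthogonal vector at a time --- is obstructed even when a flat completion exists: a $k\times n$ partial Hadamard matrix can be \emph{maximal} without being completable, so stage-wise production of single $\pm1$ vectors would require backtracking or a global argument, not an induction of the kind you sketch. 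The honest summary is that the statement remains open, the paper supplies only the supporting special cases you cite, and your proposal neither closes the gap nor identifies a step that would do so.
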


\begin{conjecture}
Every complex flat ETF is complex Hadamard.
\end{conjecture}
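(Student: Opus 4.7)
The plan is to apply Proposition~\ref{proposition.Hadamard ETF characterization}, which reduces the conjecture to showing that every complex flat ETF $\bfPhi$ admits a flat Naimark complement. Fix any Naimark complement $\widetilde{\bfPhi}_0$ of a given $d\times n$ flat $\bfPhi$; then every other Naimark complement is of the form $\bfU\widetilde{\bfPhi}_0$ for some $\bfU\in U(n-d)$, so the goal is to find such a $\bfU$ making every entry of $\bfU\widetilde{\bfPhi}_0$ have modulus $1$.

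I would first verify the conjecture on all currently known families of flat ETFs: regular simplices (where the $1$-dimensional Naimark complement is automatically flat), harmonic ETFs (where the unused rows of the character table furnish a flat complement), and Kirkman/Steiner ETFs (handled by Theorem~\ref{theorem.Naimark complement of Kirkman ETF}). Since these constitute every presently known construction, the conjecture is at least consistent with every concrete example in hand. For the general case, I would recast the problem in terms of the Naimark Gram matrix $\alpha\bfI-\bfPhi^*\bfPhi$, which is fixed by $\bfPhi$ and whose off-diagonal entries have constant modulus by equiangularity, and ask when it admits a factorization $\widetilde{\bfPhi}^*\widetilde{\bfPhi}$ with $\widetilde{\bfPhi}$ flat. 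Combining this reformulation with the strong parameter restrictions from Theorem~\ref{theorem.QSD} might force the existence of such a flat factorization, at least in the real case.

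The main obstacle is that a naive dimension count argues \emph{against} the conjecture for unstructured input: the orbit $\bfU\widetilde{\bfPhi}_0$ is a real manifold of dimension $(n-d)^2$, while the flat locus inside the $(n-d)\times n$ complex matrices has real codimension $(n-d)n$ in an ambient space of real dimension $2(n-d)n$. A transverse intersection would require $(n-d)^2\geq(n-d)n$, i.e., $d\leq 0$, so the existence of a flat complement can only follow from the special algebraic structure of flat $\bfPhi$. In the real case (Conjecture~1) the difficulty is compounded because extending $\bfPhi$ to an $n\times n$ Hadamard matrix is a partial-Hadamard-completion problem intimately tied to the Hadamard conjecture itself. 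I therefore expect the realistic route forward is either to use Theorem~\ref{theorem.QSD} and prospective analogues to classify flat ETFs so rigidly that every candidate demonstrably belongs to a known Hadamard family, or to find a genuinely new construction of flat complements that exploits hidden structure of the source ETF.
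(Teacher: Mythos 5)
The statement you were asked to prove is Conjecture~2 of the paper, and the paper contains no proof of it: the authors explicitly say that ``it is reasonable to believe these conjectures are false,'' that they were unable to disprove them, and that the only evidence in their favor is that every known family of flat ETFs happens to be possibly-complex Hadamard---regular simplices, harmonic ETFs, and (by Theorem~\ref{theorem.Naimark complement of Kirkman ETF}) Kirkman ETFs, exactly the three families you check. Your proposal is honest about its own status, and its ingredients are sound as far as they go: the reduction via Proposition~\ref{proposition.Hadamard ETF characterization} to finding a flat Naimark complement is correct; the parametrization of all Naimark complements as $\bfU\widetilde{\bfPhi}_0$ with $\bfU$ unitary is legitimate, since every Naimark complement has the same Gram matrix $\alpha\bfI-\bfPhi^*\bfPhi$ and two full-row-rank matrices with equal Gram matrices differ by a left unitary; and your dimension-count heuristic articulates concretely the same doubt the authors express informally. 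But no step of the proposal actually produces a flat $\bfU\widetilde{\bfPhi}_0$ for an arbitrary complex flat ETF, and you concede as much. So the genuine gap is the conjecture itself, which is precisely where the paper leaves it; what you have written is a research program consistent with the paper's discussion, not a proof.

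Two smaller corrections to the program. First, Theorem~\ref{theorem.QSD} characterizes \emph{real} flat ETFs via quasi-symmetric designs, so leaning on it for the complex conjecture is a category mismatch; at best that route (or a genuinely new complex analogue, which the paper does not supply) bears on Conjecture~1. Second, your codimension count can be sharpened: on the orbit $\set{\bfU\widetilde{\bfPhi}_0:\bfU\in U(n-d)}$ the column norms are already fixed, so flatness imposes only about $n(n-d-1)$ independent real conditions rather than $n(n-d)$; the expected dimension of the intersection is then $(n-d)^2-n(n-d-1)=n-d(n-d)$, which is still nonpositive for all $n\geq 4$ with $1<d<n-1$. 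The qualitative conclusion---that a flat complement can exist only by virtue of special algebraic structure, not genericity---survives, and it is worth noting that this refined count is tight in at least one sense: the paper's lower bounds $n\geq d+d^{1/2}+1$ (complex Hadamard) show that flat complements, when they do exist, force strong parameter constraints, which is the flavor of rigidity any eventual proof or disproof would need to exploit.
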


To be clear, it is reasonable to believe these conjectures are false,
since apart from the above examples, there is no compelling reason to believe that every flat ETF necessarily has a flat Naimark complement.
That said, we have not been able to prove that either one of these conjectures is false.
In fact, in the next section we show to the contrary that the other known class of flat ETFs---the Kirkman ETFs of~\cite{JasperMF14}---are possibly-complex Hadamard.
Both the proof of that result as well as the characterization of real flat ETFs given in Section~4 depend on certain well-known combinatorial designs.

To elaborate, given integers $v$, $k$, $\lambda$, $r$ and $b$ with $v>k>0$ and $b>0$,
a corresponding \textit{balanced incomplete block design} $\BIBD(v,k,\lambda,r,b)$ is a set $\calV$ of cardinality $v$---whose elements are called \textit{vertices}---along with $b$ subsets of $\calV$---called \textit{blocks}---with the property that each block contains exactly $k$ vertices,
each vertex is contained in exactly $r$ blocks,
and every pair of distinct vertices is contained in exactly $\lambda$ blocks.
Letting $\bfX$ be a corresponding $b\times v$ incidence matrix,
this is equivalent to having
\begin{equation}
\label{equation.BIBD properties}
\bfX\bfone=k\bfone,
\quad
\bfX^\rmT\bfone=r\bfone,
\quad
\bfX^\rmT\bfX=(r-\lambda)\bfI+\lambda\bfJ,
\end{equation}
where $\bfone$ and $\bfJ$ denote all-ones vectors and matrices, respectively.
For example, taking all two-element subsets of $\calV=\set{1,2,3,4}$ gives a $\BIBD(4,2,1,3,6)$ with an incidence matrix of
\begin{equation}
\label{equation.6x4 incidence matrix}
\bfX=\left[\begin{array}{cccc}
1&1&0&0\\
0&0&1&1\\
1&0&1&0\\
0&1&0&1\\
1&0&0&1\\
0&1&1&0
\end{array}\right].
\end{equation}
The parameters of a BIBD are dependent,
satisfying
\begin{equation}
\label{equation.BIBD parameters relationship}
bk=vr,
\quad
(v-1)\lambda=r(k-1).
\end{equation}
Since $v>k>0$ and $b>0$, these relationships imply $0\leq\lambda<r<b$.
This in turn implies that $\bfX^\rmT\bfX=(r-\lambda)\bfI+\lambda\bfJ$ is positive-definite, implying \textit{Fisher's inequality}, namely that $b\geq v$.

\section{Naimark complements of Steiner and Kirkman equiangular tight frames.}

We begin this section by constructing an explicit Naimark complement for any Steiner ETF.
Every Steiner ETF arises from a $\BIBD(v,k,\lambda,r,b)$ with $\lambda=1$;
such a combinatorial design is also known as a \textit{Steiner system} $\operatorname{S}(t,k,v)$ with $t=2$.
To be precise,
let $\bfX$ be the $b\times v$ incidence matrix of a $\BIBD(v,k,1,r,b)$,
let $\bfPsi$ be the $r\times(r+1)$ synthesis operator of a flat regular simplex.
The synthesis operator $\bfPhi$ of the corresponding Steiner ETF is the $b\times v(r+1)$ matrix obtained by replacing each $1$-valued entry in any given column of $\bfX$ with a distinct row of $\bfPsi$, and replacing each $0$-valued entry of $\bfX$ with a $1\times(r+1)$ submatrix of zeros.
For example, combining~\eqref{equation.6x4 incidence matrix} and~\eqref{equation.3x4 real flat regular simplex} in this way gives the following Steiner ETF with parameters $(d,n)=(6,16)$:
\begin{equation}
\label{equation.6x16 Steiner ETF}
\bfPhi
=\left[\begin{array}{rrrrrrrrrrrrrrrr}
 1&-1& 1&-1&\phantom{-}1&-1& 1&-1&\phantom{-}0& 0& 0& 0&\phantom{-}0& 0& 0& 0\\
 0& 0& 0& 0& 0& 0& 0& 0& 1&-1& 1&-1& 1&-1& 1&-1\\
 1& 1&-1&-1& 0& 0& 0& 0& 1& 1&-1&-1& 0& 0& 0& 0\\
 0& 0& 0& 0& 1& 1&-1&-1& 0& 0& 0& 0& 1& 1&-1&-1\\
 1&-1&-1& 1& 0& 0& 0& 0& 0& 0& 0& 0& 1&-1&-1& 1\\
 0& 0& 0& 0& 1&-1&-1& 1& 1&-1&-1& 1& 0& 0& 0& 0
\end{array}\right].
\end{equation}
This construction was introduced in~\cite{GoethalsS70} as a method for obtaining SRGs.
In~\cite{FickusMT12} it was rediscovered and recognized as a method for constructing ETFs.
These ETFs are real if the flat regular simplex is real, namely when it is obtained from a Hadamard matrix of size $r+1$.
This method of combining two matrices is unlike other common methods for doing so, as it explicitly relies on the fact that $\bfX$ is $\set{0,1}$-valued and has constant column sums.
Because of the unusualness of this construction,
the standard proof of the fact that it yields ETFs is ``wordy," see~\cite{GoethalsS70,FickusMT12}.
This is frustrating, since $\bfPhi$ seems tantalisingly similar to the tensor product of $\bfX$ and $\bfPhi$.
In fact, as we now explain, an explicit tensor-product-based expression for $\bfPhi$ can be found, provided we first ``lift" the incidence matrix of a BIBD to a permutation matrix:

\begin{definition}
\label{definition.BIBD permutation matrix}
Let $\bfX$ be the $b\times v$ incidence matrix of a $\BIBD(v,k,1,r,b)$.
A corresponding \textit{BIBD permutation matrix} is a permutation matrix $\bfPi$ of size $bk=vr$ such that
\begin{equation}
\label{equation.definition of BIBD permutation matrix}
\bfX
=(\bfI_b^{}\otimes\bfone_k^\rmT)\bfPi(\bfI_v\otimes\bfone_r)^{}.
\end{equation}
\end{definition}

Here, $\bfI_b$ and $\bfI_v$ are $b\times b$ and $v\times v$ identity matrices, respectively,
while $\bfone_k$ and $\bfone_r$ are all-ones vectors of size $k\times1$ and $r\times 1$, respectively.
Regarding $\bfPi$ as a $b\times v$ array of $\set{0,1}$-valued submatrices $\set{\bfPi_{i,j}}_{i=1,}^{b}\,_{j=1}^{v}$ of size $k\times r$,
this means that for any $i=1,\dotsc,b$ and $j=1,\dotsc,v$,
\begin{equation}
\label{equation.definition of BIBD permutation matrix restated}
\bfX(i,j)
=\bfone_k^\rmT\bfPi_{i,j}\bfone_r^{}
=\sum_{p=1}^{k}\sum_{q=1}^{r}\bfPi_{i,j}(p,q).
\end{equation}
Thus, $\bfPi_{i,j}=\bfzero$ if $\bfX(i,j)=0$ and contains exactly one $1$-valued entry if $\bfX(i,j)=1$.
The permutation matrix of a BIBD is not unique, but one does always exist.
For example,
we can define $\bfPi_{i,j}(p,q)$ to be $1$ if and only if $\bfX(i,j)$ is both the $p$th ``$1$" in the $i$th row of $\bfX$ and the $q$th ``$1$" in the $j$th column of $\bfX$;
for the incidence matrix~\eqref{equation.6x4 incidence matrix}, this gives
\begin{equation}
\label{equation.12x12 BIBD permutation matrix}
\setlength{\arraycolsep}{4pt}
\bfPi
=\left[\begin{array}{rrr|rrr|rrr|rrr}
1&0&0&0&0&0&0&0&0&0&0&0\\
0&0&0&1&0&0&0&0&0&0&0&0\\\hline
0&0&0&0&0&0&1&0&0&0&0&0\\
0&0&0&0&0&0&0&0&0&1&0&0\\\hline
0&1&0&0&0&0&0&0&0&0&0&0\\
0&0&0&0&0&0&0&1&0&0&0&0\\\hline
0&0&0&0&1&0&0&0&0&0&0&0\\
0&0&0&0&0&0&0&0&0&0&1&0\\\hline
0&0&1&0&0&0&0&0&0&0&0&0\\
0&0&0&0&0&0&0&0&0&0&0&1\\\hline
0&0&0&0&0&1&0&0&0&0&0&0\\
0&0&0&0&0&0&0&0&1&0&0&0
\end{array}\right].
\end{equation}
This method produces a permutation matrix since the corresponding mappings from $[b]\times[k]$ and $[v]\times[r]$ into $\set{(i,j): \bfX(i,j)=1}$ are both invertible,
establishing a permutation $(i,p)\mapsto(j,q)$.

A Steiner ETF is obtained by taking a BIBD's permutation matrix $\bfPi$,
and replacing the vector $\bfone_r$
in~\eqref{equation.definition of BIBD permutation matrix} with an $r\times(r+1)$ unimodular regular simplex $\bfPsi$,
that is, by letting
\begin{equation}
\label{equation.Steiner ETF tensor definition}
\bfPhi=(\bfI_b^{}\otimes\bfone_k^\rmT)\bfPi(\bfI_v\otimes\bfPsi)^{}.
\end{equation}
Indeed, regarding $\bfI_b^{}\otimes\bfone_k^\rmT$, $\bfPi$ and $\bfI_v\otimes\bfPsi$ as $b\times b$, $b\times v$ and $v\times v$ arrays of submatrices of size $1\times k$, $k\times r$ and $r\times(r+1)$, respectively, gives that $\bfPhi$ is a $b\times v$ array of submatrices of size $1\times(r+1)$.
In particular, the $(i,j)$th submatrix of $\bfPhi$ is $\bfPhi_{i,j}=\bfone_k^\rmT\bfPi_{i,j}\bfPsi$;
when $\bfX(i,j)=0$, this is a row of zeros;
when $\bfX(i,j)=1$, this is the row of $\bfPsi$ corresponding to the column index of the nonzero entry of $\bfPi_{i,j}$.
As part of the next result,
we directly verify that~\eqref{equation.Steiner ETF tensor definition} indeed defines the synthesis operator of an ETF with parameters $(d,n)=(b,v(r+1))$.
In fact, we show this holds even if $\bfone_k$ is replaced with any $k\times 1$ vector with unimodular entries;
this turns out to be the key to constructing an explicit Naimark complement of $\bfPhi$.

Motivating by example, note that when $\bfPi$ and $\bfPsi$ are~\eqref{equation.12x12 BIBD permutation matrix} and~\eqref{equation.3x4 real flat regular simplex}, respectively, the matrix $(\bfI_6\otimes\left[\begin{array}{rr}1&1\end{array}\right])\bfPi(\bfI_4\otimes\bfPsi)^{}$ gives the ETF in~\eqref{equation.6x16 Steiner ETF}.
Meanwhile, replacing $\left[\begin{array}{rr}1&1\end{array}\right]$ with the orthogonal flat vector
$\left[\begin{array}{rr}1&-1\end{array}\right]$ yields the $6\times 16$ synthesis operator of a second ETF:
\begin{equation}
\label{equation.6x16 complementary Steiner ETF}
(\bfI_6\otimes\left[\begin{array}{rr}1&-1\end{array}\right])\bfPi(\bfI_4\otimes\bfPsi)^{}
=\left[\begin{array}{rrrrrrrrrrrrrrrr}
 1&-1& 1&-1&-1& 1&-1& 1& 0& 0&\phantom{-}0& 0& 0& 0& 0&\phantom{-}0\\
 0& 0& 0& 0& 0& 0& 0& 0& 1&-1& 1&-1&-1& 1&-1& 1\\
 1& 1&-1&-1& 0& 0& 0& 0&-1&-1& 1& 1& 0& 0& 0& 0\\
 0& 0& 0& 0& 1& 1&-1&-1& 0& 0& 0& 0&-1&-1& 1& 1\\
 1&-1&-1& 1& 0& 0& 0& 0& 0& 0& 0& 0&-1& 1& 1&-1\\
 0& 0& 0& 0& 1&-1&-1& 1&-1& 1& 1&-1& 0& 0& 0& 0
\end{array}\right].
\end{equation}
In essence, this second ETF is obtained by negating the second nonzero $1\times(r+1)$ submatrix in each row of~\eqref{equation.6x16 Steiner ETF}.
As explicitly verified in the next result, this ensures that the row spaces of the synthesis operators of these two ETFs are mutually orthogonal.
In particular,
\eqref{equation.6x16 complementary Steiner ETF} gives six rows of a $10\times 16$ Naimark complement of~\eqref{equation.6x16 Steiner ETF}.
The remaining four rows can be obtained by tensoring $\bfI_v=\bfI_4$ with the $1\times(r+1)=1\times4$ Naimark complement of $\bfPsi$, and scaling it appropriately:
\begin{equation*}
\sqrt{k}(\bfI_v\otimes\bfone_{r+1}^*)
=\sqrt{2}(\bfI_4\otimes\bfone_4^*)
=\sqrt{2}\left[\begin{array}{rrrrrrrrrrrrrrrr}
1&1&1&1&0&0&0&0&0&0&0&0&0&0&0&0\\
0&0&0&0&1&1&1&1&0&0&0&0&0&0&0&0\\
0&0&0&0&0&0&0&0&1&1&1&1&0&0&0&0\\
0&0&0&0&0&0&0&0&0&0&0&0&1&1&1&1
\end{array}\right].
\end{equation*}
In the following result, we generalize this example to construct an explicit Naimark complement for any Steiner ETF.

\begin{theorem}
\label{theorem.Naimark complement of Steiner ETF}
Let $\bfPi$ be the permutation matrix of a $\BIBD(v,k,1,r,b)$---see Definition~\ref{definition.BIBD permutation matrix}---and let $\bfF$ and $\bfG$ be possibly-complex Hadamard matrices of size $k$ and $r+1$, respectively.
Write $\bfG=\left[\begin{array}{cc}\bfG_1&\bfg_2\end{array}\right]$ where $\bfG_1$ is $(r+1)\times r$.
For any $l=1,\dotsc,k$, let
\begin{equation}
\label{equation.tensor definition of Steiner ETF}
\bfPhi_{\smash{l}}
:=(\bfI_b^{}\otimes\bff_{\smash{l}}^*)\bfPi(\bfI_v\otimes\bfG_1^*),
\end{equation}
where $\bff_{\smash{l}}$ is the $l$th column of $\bfF$.
Then, the $v(r+1)$ columns of each $\bfPhi_{\smash{l}}$ form an ETF for $\bbF^b$.
Moreover, the row spaces of the matrices $\set{\bfPhi_{\smash{l}}}_{l=1}^{k}$ are mutually orthogonal and the columns of
\begin{equation}
\label{equation.Naimark complement of Steiner ETF}
\widetilde{\bfPhi}_1=\left[\begin{array}{c}
\bfPhi_2\\
\vdots\\
\bfPhi_k\\
\sqrt{k}(\bfI_v\otimes\bfg_2^*)
\end{array}\right]
\end{equation}
form a Naimark complement for the ETF formed by the columns of $\bfPhi_1$.
Here, both $\bfF$ and $\bfG$ can be chosen to be real if and only if $k=2$ and there exists a Hadamard matrix of size $v$.
In this case, the columns of $\bfPhi_1$ and $\smash{\widetilde{\bfPhi}_1}$ form real ETFs of $v^2$ vectors for $\bbR^{\frac12v(v-1)}$ and $\bbR^{\frac12v(v+1)}$, respectively.
\end{theorem}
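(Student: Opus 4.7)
The strategy is to package all the non-trivial orthogonality claims into a single identity. Consider the $v(r+1)\times v(r+1)$ square matrix
$$\bfM := \left[\begin{array}{c} \bfPhi_1 \\ \vdots \\ \bfPhi_k \\ \sqrt{k}\,(\bfI_v\otimes\bfg_2^*) \end{array}\right].$$
If I can show $\bfM^*\bfM = k(r+1)\bfI$ (equivalently, since $\bfM$ is square, $\bfM\bfM^* = k(r+1)\bfI$), then reading $\bfM\bfM^*$ blockwise simultaneously gives: (i) each diagonal block $\bfPhi_l\bfPhi_l^* = k(r+1)\bfI_b$, making each $\bfPhi_l$ a tight frame for $\bbF^b$; (ii) each off-diagonal block $\bfPhi_l\bfPhi_{l'}^*$ vanishes for $l\neq l'$, establishing the mutual orthogonality of the row spaces of $\set{\bfPhi_l}_{l=1}^{k}$; and (iii) the rows of $\smash{\widetilde{\bfPhi}_1}$ together with those of $\bfPhi_1$ form an equal-norm orthogonal basis for $\bbF^{v(r+1)}$, so $\smash{\widetilde{\bfPhi}_1}$ is a Naimark complement of $\bfPhi_1$ in the sense of Section~2.

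To verify $\bfM^*\bfM = k(r+1)\bfI$ I would expand
$$\bfM^*\bfM = \sum_{l=1}^{k}\bfPhi_l^*\bfPhi_l + k\,(\bfI_v\otimes\bfg_2\bfg_2^*)$$
and exploit the Hadamard identities $\sum_l\bff_l\bff_l^* = \bfF\bfF^* = k\bfI_k$ and $\bfG_1\bfG_1^* + \bfg_2\bfg_2^* = \bfG\bfG^* = (r+1)\bfI_{r+1}$ together with the permutation identity $\bfPi^*\bfPi = \bfI_{vr}$. These collapse the first sum, via Kronecker manipulations, to $k(\bfI_v\otimes\bfG_1\bfG_1^*)$, and the two terms combine to give $k(r+1)\bfI_{v(r+1)}$.

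The identity above yields tightness and orthogonality but not equiangularity; I expect this last step to be the most delicate, as it is the only place where the combinatorial $\lambda=1$ property of the BIBD (rather than pure algebra) is essential. The plan is to write the $(j,q)$th column of $\bfPhi_l$, for $j\in\set{1,\dots,v}$ and $q\in\set{1,\dots,r+1}$, as the $b$-vector whose $i$th entry is $\bff_l^*\bfPi_{i,j}\bfg_{1,q}$, where $\bfg_{1,q}$ denotes the $q$th column of $\bfG_1^*$ and $\bfPi_{i,j}$ is the $(i,j)$th $k\times r$ submatrix of $\bfPi$. Since each $\bfPi_{i,j}$ is either zero or has a single $1$-valued entry, each such column has exactly $r$ nonzero unimodular entries, giving squared norm $r$. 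For the inner product of two columns indexed by $(j,q)$ and $(j',q')$ with $j\neq j'$, the property $\lambda=1$ produces exactly one index $i_0$ with $\bfX(i_0,j)=\bfX(i_0,j')=1$, and the inner product reduces to a product of four unimodular entries of $\bfF$ and $\bfG$. For $j=j'$ and $q\neq q'$, the permutation property of $\bfPi$ makes the inner product collapse to $(\bfG_1\bfG_1^*)(q,q') = -\bfg_2(q)\overline{\bfg_2(q')}$, again unimodular. This confirms that $\bfPhi_l$ is an ETF for $\bbF^b$ with $\beta=r$ and $\gamma=1$.

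For the real case, both $\bfF$ and $\bfG$ being real requires real Hadamard matrices of orders $k$ and $r+1$, forcing $k,\,r+1\in\set{1,2}\cup 4\bbZ$. The nontrivial direction is ruling out $k=4m\geq 4$: using $(k-1)\mid(v-1)$ to write $v=1+(4m-1)t$, the condition $4\mid(r+1)=t+1$ forces $t\equiv 3\pmod 4$, so $t(t-1)=2\cdot(\text{odd})$; but the BIBD integrality condition $k(k-1)\mid v(v-1)$ reduces to $4m\mid t(t-1)$, a contradiction. The remaining orders $k\in\set{3,5,6,7}$ have no real Hadamard matrices, so $k\in\set{1,2}$, and excluding the trivial $k=1$ leaves $k=2$. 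In this case $r+1=v$ and the underlying BIBD is automatically the complete graph on $v$ vertices, so the Hadamard hypothesis becomes the existence of a Hadamard matrix of order $v$. Substituting $b=v(v-1)/2$ and $n=v^2$ into the dimension counts yields ETFs of $v^2$ vectors for $\bbR^{v(v-1)/2}$ and $\bbR^{v(v+1)/2}$, respectively.
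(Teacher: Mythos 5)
Your proposal is correct, and it reorganizes the linear-algebraic half of the argument in a genuinely different way from the paper. The paper proves tightness and mutual row-space orthogonality by computing the blocks of $\bfM\bfM^*$ directly: it shows $\bfPhi_l^{}\bfPhi_{l'}^*=k(r+1)\delta_{l,l'}\bfI_b$ using $\bfG_1^*\bfG_1^{}=(r+1)\bfI_r$ and the cancellation $\bfPi(\bfI_v\otimes(r+1)\bfI_r)\bfPi^\rmT=(r+1)\bfI$, separately checks $\bfG_1^*\bfg_2^{}=\bfzero$ and the norm of the bottom block, and then counts rows via $b+[b(k-1)+v]=v(r+1)$. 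You instead compute the opposite Gram $\bfM^*\bfM$ in a single stroke using the complementary Hadamard identities $\bfF\bfF^*=k\bfI_k$ and $\bfG_1^{}\bfG_1^*+\bfg_2^{}\bfg_2^*=(r+1)\bfI_{r+1}$ together with $\bfPi^\rmT\bfPi=\bfI$, and then flip to $\bfM\bfM^*$ by squareness; note that squareness itself rests on $bk=vr$, the same count the paper makes explicit, so you should state that you are invoking \eqref{equation.BIBD parameters relationship}. Your version buys economy---one computation replaces three---at the cost of hiding which individual orthogonalities do the work. The equiangularity step is essentially the paper's own argument: the $j\neq j'$ case keyed to $\lambda=1$ producing a unique block (hence a single product of four unimodular entries), and the $j=j'$ case collapsing via unit column sums of $\bfPi$ to $(\bfG_1^{}\bfG_1^*)(q,q')=-\bfg_2(q)\overline{\bfg_2(q')}$, matching the paper's $(r+1)\bfI_{r+1}(s,s')-\bfG(s,r+1)\overline{\bfG(s',r+1)}$. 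Your real-case necessity argument is a valid mod-$4$ variant: the paper deduces $4\mid v$ from $b=\frac{vr}{k}$ and contradicts $v\equiv2\bmod4$, while you deduce $4m\mid t(t-1)$ with $t=r\equiv3\bmod4$ and contradict $t(t-1)\equiv2\bmod4$---the same obstruction in different coordinates. One small slip: the sentence claiming the ``remaining orders $k\in\set{3,5,6,7}$'' have no Hadamard matrices is not an exhaustive list of the remaining cases, but it is also redundant, since your opening restriction $k\in\set{1,2}\cup4\bbZ$ (the known half of the Hadamard conjecture) already reduces to $k\in\set{1,2}$ once multiples of $4$ are excluded; deleting that sentence repairs the exposition with no loss.
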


\begin{proof}
For any $l,l'=1,\dotsc,k$, the fact that $\bfG_1^*\bfG_1^{}=(r+1)\bfI_r$ gives
\begin{align}
\nonumber
\bfPhi_{\smash{l}}^{}\bfPhi_{\smash{l'}}^*
&=[(\bfI_b^{}\otimes\bff_{\smash{l}}^*)\bfPi(\bfI_v\otimes\bfG_1^*)][(\bfI_v\otimes\bfG_1^{})\bfPi^\rmT(\bfI_b^{}\otimes\bff_{\smash{l'}})]\\
\nonumber
&=(\bfI_b^{}\otimes\bff_{\smash{l}}^*)\bfPi(\bfI_v\otimes\bfG_1^*\bfG_1^{})\bfPi^\rmT(\bfI_b^{}\otimes\bff_{\smash{l'}}^{})\\
\nonumber
&=(\bfI_b^{}\otimes\bff_{\smash{l}}^*)\bfPi[\bfI_v\otimes(r+1)\bfI_r]\bfPi^\rmT(\bfI_b^{}\otimes\bff_{\smash{l'}}^{})\\
\nonumber
&=(r+1)(\bfI_b^{}\otimes\bff_{\smash{l}}^*)(\bfI_b^{}\otimes\bff_{\smash{l'}}^{})\\
\nonumber
&=(r+1)(\bfI_b^{}\otimes\ip{\bff_{\smash{l}}}{\bff_{\smash{l'}}})\\
\label{equation.Naimark complement of Steiner ETF 0}
&=\left\{\begin{array}{cl}k(r+1)\bfI_b,&l=l',\\\bfzero_b,&l\neq l'.\end{array}\right.
\end{align}
Thus, the columns of each $\bfPhi_{\smash{l}}$ form a tight frame for $\bbF^b$,
and the row spaces of $\bfPhi_{\smash{l}}$ and $\bfPhi_{\smash{l'}}$ are orthogonal for any $l\neq l'$, as claimed.

To see that $\widetilde{\bfPhi}_1$ is a Naimark complement of $\bfPhi_1$, note that $\bfG_1^*\bfg_2^{}=\bfzero$ and so for any $l=1,\dotsc,k$,
\begin{equation*}
\bfPhi_{\smash{l}}[\sqrt{k}(\bfI_v\otimes\bfg_2^*)]^*
=\sqrt{k}[(\bfI_b^{}\otimes\bff_{\smash{l}}^*)\bfPi(\bfI_v\otimes\bfG_1^*)](\bfI_v\otimes\bfg_2)
=\sqrt{k}(\bfI_b^{}\otimes\bff_{\smash{l}}^*)\bfPi(\bfI_v\otimes\bfG_1^*\bfg_2^{})
=\bfzero.
\end{equation*}
Also,
$[\sqrt{k}(\bfI_v\otimes\bfg_2^*)][\sqrt{k}(\bfI_v\otimes\bfg_2^*)]^*
=k(\bfI_v\otimes\ip{\bfg_2}{\bfg_2})
=k(r+1)\bfI_v$.
When combined with~\eqref{equation.Naimark complement of Steiner ETF 0},
these facts imply that the rows of $\widetilde{\bfPhi}_1$ are orthogonal to each other as well as to all rows of $\bfPhi_1$, and that all rows of $\bfPhi_1$ and $\widetilde{\bfPhi}_1$ have the same norm.
Since $\bfPhi_1$ is $b\times v(r+1)$ while $\widetilde{\bfPhi}_1$ is $[b(k-1)+v]\times v(r+1)$ where~\eqref{equation.BIBD parameters relationship} gives
$b+[b(k-1)+v]=bk+v=vr+v=v(r+1)$,
this implies that the columns of $\widetilde{\bfPhi}_1$ indeed form a Naimark complement of the tight frame formed by the columns of $\bfPhi_1$.

Next, for any $l=1,\dotsc,k$ we show that the columns of $\bfPhi_{\smash{l}}$ form an ETF for $\bbF^b$.
To be precise, we show that the all off-diagonal entries of $\bfPhi_{\smash{l}}^*\bfPhi_{\smash{l}}^{}$ have modulus $1$,
while all diagonal entries have value $r$.
By~\eqref{equation.tensor definition of Steiner ETF},
$\bfPhi_{\smash{l}}$ can be regarded as a $b\times v$ array of submatrices of size $1\times(r+1)$,
namely
$\set{(\bfPhi_{\smash{l}})_{i,j}}_{i=1,}^{b}\,_{j=1}^{v}
=\set{\bff_{\smash{l}}^*\bfPi_{i,j}\bfG_1^*}_{i=1,}^{b}\,_{j=1}^{v}$.
Thus, the Gram matrix $\bfPhi_{\smash{l}}^*\bfPhi_{\smash{l}}^{}$ is a $v\times v$ array of submatrices of size $(r+1)\times(r+1)$.
Specifically, for any $j,j'=1,\dotsc,v$, the $(j,j')$th submatrix of $\bfPhi_{\smash{l}}^*\bfPhi_{\smash{l}}^{}$ is
\begin{equation}
\label{equation.Naimark complement of Steiner ETF 1}
(\bfPhi_{\smash{l}}^*\bfPhi_{\smash{l}}^{})_{j,j'}
=\sum_{i=1}^b(\bfPhi_{\smash{l}}^*)_{j,i}(\bfPhi_{\smash{l}}^{})_{i,j'}
=\sum_{i=1}^b(\bff_{\smash{l}}^*\bfPi_{i,j}\bfG_1^*)^*(\bff_{\smash{l}}^*\bfPi_{i,j'}\bfG_1^*).
\end{equation}
Here, for any $j=1,\dotsc,v$ and $s=1,\dotsc,r+1$, the $(1,s)$th entry of $\bff_{\smash{l}}^*\bfPi_{i,j'}\bfG_1^*$ is
\begin{equation}
\label{equation.Naimark complement of Steiner ETF 2}
(\bff_{\smash{l}}^*\bfPi_{i,j}\bfG_1^*)(1,s)
=\sum_{p=1}^{k}\sum_{q=1}^{r}\overline{\bff_{\smash{l}}(p)}\bfPi_{i,j}(p,q)\bfG_1^*(q,s)
=\sum_{p=1}^{k}\sum_{q=1}^{r}\bfPi_{i,j}(p,q)\overline{\bfF(p,l)}\overline{\bfG(s,q)}.
\end{equation}
Combining~\eqref{equation.Naimark complement of Steiner ETF 1}
and~\eqref{equation.Naimark complement of Steiner ETF 2} gives that for any $j,j'=1,\dotsc,v$ and $s,s'=1,\dotsc,r+1$,
the $(s,s')$th entry of the $(j,j')$th submatrix of $\bfPhi_{\smash{l}}^*\bfPhi_{\smash{l}}^{}$ is
\begin{align}
\nonumber
(\bfPhi_{\smash{l}}^*\bfPhi_{\smash{l}}^{})_{j,j'}(s,s')
&=\sum_{i=1}^b\overline{(\bff_{\smash{l}}^*\bfPi_{i,j}\bfG_1^*)(1,s)}(\bff_{\smash{l}}^*\bfPi_{i,j'}\bfG_1^*)(1,s')\\
\label{equation.Naimark complement of Steiner ETF 3}
&=\sum_{i=1}^b\sum_{p,p'=1}^{k}\sum_{q,q'=1}^{r}
\bfPi_{i,j}(p,q)\bfPi_{i,j'}(p',q')\bfF(p,l)\overline{\bfF(p',l)}\bfG(s,q)\overline{\bfG(s',q')}.
\end{align}
To proceed, recall that the definition of the BIBD permutation matrix
implies~\eqref{equation.definition of BIBD permutation matrix restated} where $\bfX$ is the $b\times v$ incidence matrix of the underlying $\BIBD(v,k,1,r,b)$.
In particular, if $j\neq j'$ then
\begin{equation*}
1
=(\bfX^\rmT\bfX)(j,j')
=\sum_{i=1}^{b}\bfX(i,j)\bfX(i,j')
=\sum_{i=1}^b\sum_{p,p'=1}^{k}\sum_{q,q'=1}^{r}\bfPi_{i,j}(p,q)\bfPi_{i,j'}(p',q').
\end{equation*}
Since $\bfPi$ is $\set{0,1}$-valued,
this means that when $j\neq j'$,
there exists exactly one choice of $(i,p,p',q,q')$ such that $\bfPi_{i,j}(p,q)\bfPi_{i,j'}(p',q')=1$.
(Here, $i$ corresponds to the unique block in the design that contains both the $j$th and $j'$th vertices, whereupon $(i,j)$ uniquely determines $(p,q)$ and $(i,j')$ uniquely determines $(p',q')$.)
As such, when $j\neq j'$, there is exactly one nonzero summand in~\eqref{equation.Naimark complement of Steiner ETF 3}, that is,
$(\bfPhi_{\smash{l}}^*\bfPhi_{\smash{l}}^{})_{j,j'}(s,s')
=\bfF(p,l)\overline{\bfF(p',l)}\bfG(s,q)\overline{\bfG(s',q')}$
for this unique choice of $(i,p,p',q,q')$.
In particular, if $j\neq j'$ then $\abs{(\bfPhi_{\smash{l}}^*\bfPhi_{\smash{l}}^{})_{j,j'}(s,s')}=1$ for all $s,s'$.

In the remaining case where $j= j'$, note that since each submatrix $\bfPhi_{i,j}$ contains at most one entry that has value $1$, $\bfPi_{i,j}(p,q)\bfPi_{i,j}(p',q')=1$ only when $p'=p$ and $q'=q$.
As such, when $j=j'$, \eqref{equation.Naimark complement of Steiner ETF 3} simplifies to
\begin{align*}
(\bfPhi_{\smash{l}}^*\bfPhi_{\smash{l}}^{})_{j,j}(s,s')
&=\sum_{i=1}^b\sum_{p=1}^{k}\sum_{q=1}^{r}
\bfPi_{i,j}(p,q)\bfF(p,l)\overline{\bfF(p,l)}\bfG(s,q)\overline{\bfG(s',q)}\\
&=\sum_{q=1}^{r}\bfG(s,q)\overline{\bfG(s',q)}\sum_{i=1}^b\sum_{p=1}^{k}\bfPi_{i,j}(p,q).
\end{align*}
Here, $\sum_{i=1}^b\sum_{p=1}^{k}\bfPi_{i,j}(p,q)$ is the sum of all of the entries in a column of the permutation matrix $\bfPi$, namely $1$.
When combined with the fact that $\bfG$ is a Hadamard matrix, this implies
\begin{equation*}
(\bfPhi_{\smash{l}}^*\bfPhi_{\smash{l}}^{})_{j,j}(s,s')
=\sum_{q=1}^{r}\bfG(s,q)\overline{\bfG(s',q)}
=(r+1)\bfI_{r+1}(s,s')-\bfG(s,r+1)\overline{\bfG(s',r+1)},
\end{equation*}
a quantity which equals $r$ when $s=s'$ and has modulus $1$ when $s\neq s'$.

For the final conclusions, note that if both $\bfF$ and $\bfG$ are (real) Hadamard matrices,
then the known half of the Hadamard conjecture implies that $k$ and $r+1$ are either $2$ or are divisible by $4$.
In particular, if $k\neq 2$, then $k$ and $r+1\geq k+1$ are divisible by $4$.
Since $b=\frac{vr}k$ is an integer and $r$ is odd, this implies that $4$ divides $v$,
contradicting the fact that $v=r(k-1)+1\equiv(-1)^2+1\equiv 2\bmod 4$.
Thus, if $\bfF$ and $\bfG$ are both Hadamard matrices, then $k=2$.
Conversely, if $k=2$ and there exists a Hadamard matrix of size $v$,
we note that there is a unique BIBD on $v$ vertices with this $k$ and with $\lambda=1$,
namely the $\BIBD(v,2,1,v-1,\frac12v(v-1))$ that consists of all $2$-element subsets of $[v]$.
Taking this BIBD, letting $\bfF$ be the canonical Hadamard matrix of size $2$,
and letting $\bfG$ be the given Hadamard matrix of size $v$, the resulting matrices $\bfPhi_1$ and $\widetilde{\bfPhi}_1$ are real flat matrices of size $\frac12v(v-1)\times v^2$ and $\frac12v(v+1)\times v^2$, respectively.
\end{proof}

For any $\BIBD(v,k,1,r,b)$ we note there always exists complex Hadamard matrices of size $k$ and $r+1$, such as DFTs.
In fact, if $k>2$ and there exists a Hadamard matrix of size $r+1$,
then taking $\bfG$ to be that matrix and letting $\bfF$ be a DFT (or any other complex Hadamard matrix whose first column is all ones),
the Steiner ETF $\bfPhi_1$ is real and its Naimark complement $\widetilde{\bfPhi}_1$ is complex.
Of course, any real ETF has a real Naimark complement.
In fact, if $\bfG$ is Hadamard and $\bfF$ is any real multiple of an orthogonal matrix whose first column is all ones,
then the first part of the proof of Theorem~\ref{theorem.Naimark complement of Steiner ETF} is still valid and~\eqref{equation.Naimark complement of Steiner ETF} is still a Naimark complement of $\bfPhi_1$.
However, in this case, $\bfPhi_l$ is not necessarily an ETF for $l=2,\dotsc,k$.
For example, the \textit{Fano plane} is a well-known $\BIBD(7,3,1,3,7)$,
and letting $\bfG$ be the canonical $4\times 4$ Hadamard matrix and letting either
\begin{equation*}
\bfF=\left[\begin{array}{rrr}
1&\sqrt{2}&0\smallskip\\
1&-\frac1{\sqrt{2}}&\frac{\sqrt3}{\sqrt2}\smallskip\\
1&-\frac1{\sqrt{2}}&-\frac{\sqrt3}{\sqrt2}
\end{array}\right]
\quad
\text{or}
\quad
\bfF=\left[\begin{array}{lll}
1&\,1&\,1\\
1&\omega&\omega^2\\
1&\omega^2&\omega
\end{array}\right],\ \omega=\exp(\tfrac{2\pi\rmi}3)
\end{equation*}
gives either a real or complex Naimark complement $\widetilde{\bfPhi}_1$ for the same $7\times 28$ real ETF $\bfPhi_1$.
In the second case however, $\bfPhi_2$ and $\bfPhi_3$ are themselves complex Steiner ETFs;
as we now discuss, this is important when trying to modify these ETFs in a way that makes them complex Hadamard.

\subsection{Naimark complements of Kirkman equiangular tight frames}

A BIBD is \textit{resolvable}---denoted an RBIBD---if its $b$ blocks can be arranged as $r$ collections of $\frac br=\frac vk$ blocks apiece---called \textit{parallel classes}---with each parallel class forming a partition of the vertex set.
For example, \eqref{equation.6x4 incidence matrix} is the incidence matrix of an $\RBIBD(4,2,1,3,6)$ since its six blocks can be arranged as three parallel classes---blocks one and two, blocks three and four, and blocks five and six---with each class giving a partition of the vertex set $\set{1,2,3,4}$.

In~\cite{JasperMF14}, it is shown that every Steiner ETF arising from a $\RBIBD(v,k,1,r,b)$ can be made flat by applying a scaled unitary operator to it.
Such ETFs are dubbed Kirkman ETFs, in honor of \textit{Kirkman's schoolgirl problem}, a foundational problem in combinatorial design regarding the existence of an $\RBIBD(15,3,1,7,35)$.
To be precise, let $\bfX$ be the incidence matrix of an $\RBIBD(v,k,1,r,b)$,
arranged without loss of generality as an $r\times 1$ array of submatrices of size $\frac vk\times v$, each corresponding to a parallel class.
For any $l=1,\dotsc,k$, and any possibly-complex Hadamard matrices $\bfF$ and $\bfG$ of size $k$ and $r+1$, respectively, let $\bfPhi_l$ be the corresponding Steiner ETF~\eqref{equation.tensor definition of Steiner ETF}.
Due to the structure of $\bfX$,
every column of $\bfPhi_l$ is a direct sum of $r$ vectors in $\bbF^{\frac vk}$, each having a single entry of modulus one with all other entries being zero.
Multiplying any such vector by a possibly-complex Hadamard matrix $\bfE$ of size $\frac vk$ produces a vector with all unimodular entries.
In particular, $(\bfI_r\otimes\bfE)\bfPhi_l$ is flat.
Moreover, since $(\bfI_r\otimes\bfE)$ is a scalar multiple of a unitary matrix,
the columns of $(\bfI_r\otimes\bfE)\bfPhi_l$ still form an ETF for $\bbF^b$.
When combined with Theorem~\ref{theorem.Naimark complement of Steiner ETF},
these ideas lead to the following result:

\begin{theorem}
\label{theorem.Naimark complement of Kirkman ETF}
Let $\bfPi$ be the permutation matrix of an $\RBIBD(v,k,1,r,b)$, and let $\bfE$, $\bfF$ and $\bfG$ be possibly-complex Hadamard matrices of size $\frac vk$, $k$ and $r+1$, respectively.
Write $\bfG=\left[\begin{array}{cc}\bfG_1&\bfg_2\end{array}\right]$ where $\bfG_1$ is $(r+1)\times r$.
For any $l=1,\dotsc,k$, let
\begin{equation*}
\bfPsi_{\smash{l}}
:=(\bfI_r\otimes\bfE)(\bfI_b^{}\otimes\bff_{\smash{l}}^*)\bfPi(\bfI_v\otimes\bfG_1^*),
\end{equation*}
where $\bff_{\smash{l}}$ is the $l$th column of $\bfF$.
Then, the $v(r+1)$ columns of each $\bfPsi_{\smash{l}}$ form a flat ETF for $\bbF^b$.
Moreover, the row spaces of the matrices $\set{\bfPsi_{\smash{l}}}_{l=1}^{k}$ are mutually orthogonal and the matrix
\begin{equation*}
\widetilde{\bfPsi}_1=\left[\begin{array}{c}
\bfPsi_2\\
\vdots\\
\bfPsi_k\\
(\bfE\otimes\bfF)(\bfI_v\otimes\bfg_2^*)
\end{array}\right]
\end{equation*}
is a flat Naimark complement of $\bfPsi_1$,
meaning $\bfPsi_1$ is a possibly-complex Hadamard ETF.

In particular, if there exists a Hadamard matrix of size $u$ then there exists a Hadamard ETF with parameters $(d,n)=(u(2u-1),4u^2)$.
\end{theorem}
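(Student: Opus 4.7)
The plan is to exploit Theorem~\ref{theorem.Naimark complement of Steiner ETF} by writing $\bfPsi_l=(\bfI_r\otimes\bfE)\bfPhi_l$, where $\bfPhi_l$ is the Steiner ETF synthesis operator from that theorem. Since $\bfE$ is Hadamard of size $\tfrac{v}{k}$, we have $\bfE\bfE^*=\tfrac{v}{k}\bfI_{v/k}$, so $\bfI_r\otimes\bfE$ is $\sqrt{v/k}$ times a unitary. Tightness then passes through via
\begin{equation*}
\bfPsi_l^{}\bfPsi_l^*=(\bfI_r\otimes\bfE)\bfPhi_l^{}\bfPhi_l^*(\bfI_r\otimes\bfE^*)=k(r+1)(\bfI_r\otimes\bfE\bfE^*)=(r+1)v\bfI_b.
\end{equation*}
Equiangularity passes through because $\bfPsi_l^*\bfPsi_l^{}=\tfrac{v}{k}\bfPhi_l^*\bfPhi_l^{}$ merely rescales all entries uniformly, and pairwise row-space orthogonality of $\set{\bfPsi_l}_{l=1}^{k}$ is inherited from that of $\set{\bfPhi_l}_{l=1}^{k}$ by the same tensor calculation with $l\neq l'$.

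The genuinely new ingredient is flatness, and here resolvability is essential. After ordering the $b$ rows of $\bfPhi_l$ so that they form $r$ groups of $\tfrac{v}{k}$ rows, one per parallel class, each column of $\bfPhi_l$ contains exactly one unimodular nonzero entry within each group (and zeros elsewhere in the group), simply because within each parallel class every vertex lies in exactly one block. Applying $\bfI_r\otimes\bfE$ then replaces each of these standard-basis-like segments with a column of $\bfE$ scaled by a unimodular factor, and since $\bfE$ is Hadamard, every resulting entry is unimodular.

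To verify that $\widetilde{\bfPsi}_1$ is a flat Naimark complement of $\bfPsi_1$, I would compute row-Gram products in the same spirit as Theorem~\ref{theorem.Naimark complement of Steiner ETF}. Orthogonality of the last block $(\bfE\otimes\bfF)(\bfI_v\otimes\bfg_2^*)$ to each $\bfPsi_l$ collapses to $\bfG_1^*\bfg_2^{}=\bfzero$, which holds because $\bfG^*\bfG=(r+1)\bfI_{r+1}$. The self-Gram of this block is
\begin{equation*}
(\bfE\otimes\bfF)(\bfI_v\otimes\bfg_2^*\bfg_2^{})(\bfE^*\otimes\bfF^*)=(r+1)(\bfE\bfE^*\otimes\bfF\bfF^*)=(r+1)v\bfI_v,
\end{equation*}
matching the row norms of the $\bfPsi_l$ computed above. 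Flatness of this block follows because $\bfE\otimes\bfF$ is a Hadamard matrix of size $v$ and $\bfI_v\otimes\bfg_2^*$ selects one unimodular entry per column. The row count agrees with that in Theorem~\ref{theorem.Naimark complement of Steiner ETF}, so $\widetilde{\bfPsi}_1$ is a valid Naimark complement; since both $\bfPsi_1$ and $\widetilde{\bfPsi}_1$ are flat, $\bfPsi_1$ is a possibly-complex Hadamard ETF.

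For the final parameter claim I specialize to $v=2u$ and $k=2$, whence $r=2u-1$, $b=u(2u-1)$, and $v(r+1)=4u^2$. The required $\RBIBD(2u,2,1,2u-1,u(2u-1))$ is a $1$-factorization of $K_{2u}$, which exists for every positive integer $u$. Hadamard matrices of sizes $k=2$ and $\tfrac{v}{k}=u$ are respectively canonical and provided by hypothesis, and a Hadamard matrix of size $r+1=2u$ is obtained from the size-$u$ one by the standard Sylvester doubling. The main technical subtlety throughout is the flatness argument of the second paragraph, which is the only place that genuinely depends on resolvability; the remaining verifications reduce to routine tensor-product bookkeeping using the identities $\bfG_1^*\bfG_1^{}=(r+1)\bfI_r$, $\bfg_2^*\bfG_1^{}=\bfzero$, $\bfE\bfE^*=\tfrac{v}{k}\bfI_{v/k}$, and $\bfF\bfF^*=k\bfI_k$.
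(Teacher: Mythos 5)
Your proposal is correct and takes essentially the same approach as the paper: you reduce to Theorem~\ref{theorem.Naimark complement of Steiner ETF} by writing $\bfPsi_l=(\bfI_r\otimes\bfE)\bfPhi_l$, derive flatness from the one-nonzero-entry-per-parallel-class structure that resolvability forces on each column, take $(\bfE\otimes\bfF)(\bfI_v\otimes\bfg_2^*)$ as the flat final block, and specialize with $v=2u$, $k=2$, the round-robin $1$-factorization, and $\bfG=\bfE\otimes\bfF$ exactly as the paper does. The only (cosmetic) difference is that you verify the row-Gram identities by direct tensor computation, whereas the paper simply observes that $\bfI_r\otimes\bfE$ and $\bfE\otimes\bfF$ are invertible scalar multiples of unitaries and hence preserve row spaces and orthogonality.
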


\begin{proof}
As noted above,
for each $l=1,\dotsc,j$,
the columns of $\bfPsi_l=(\bfI_r\otimes\bfE)\bfPhi_l$ form a flat ETF for $\bbF^b$.
Moreover, for each $l=1,\dotsc,j$, the row space of $\bfPsi_l$ equals that of $\bfPhi_l$.
By Theorem~\ref{theorem.Naimark complement of Steiner ETF}, these row spaces are mutually orthogonal.
They are also orthogonal to the row space of $\sqrt{k}(\bfI_v\otimes\bfg_2^*)$,
which equals that of $(\bfE\otimes\bfF)(\bfI_v\otimes\bfg_2^*)$.
Here, $\bfE\otimes\bfF$ is a possibly-complex Hadamard matrix of size $v$,
implying that $(\bfE\otimes\bfF)(\bfI_v\otimes\bfg_2^*)$ has unimodular entries and orthogonal rows.
Altogether, these facts imply that $\widetilde{\bfPsi}_1$ is indeed a flat Naimark complement for $\bfPsi_1$.
By Proposition~\ref{proposition.Hadamard ETF characterization}, this means $\bfPhi_1$ is a possibly-complex Hadamard ETF.

For the final conclusion, assume there exists a Hadamard matrix $\bfE$ of size $u$,
and let $v=2u$.
Since $v$ is even, the unique $\BIBD(v,2,1,v-1,\frac12v(v-1))$ is resolvable using the well-known \textit{round-robin tournament schedule}.
As such, letting $\bfF$ be the canonical Hadamard matrix of size $2$,
and noting that $\bfG=\bfE\otimes\bfF$ is a Hadamard matrix of size $2u=v=r+1$,
the columns of $\bfPsi_1$ and $\widetilde{\bfPhi}_1$ form Naimark complementary real flat ETFs with parameters $(d,n)=(\frac12v(v-1),v^2)=(u(2u-1),4u^2)$ and $(n-d,n)=(\frac12v(v+1),v^2)=(u(2u+1),4u^2)$, respectively.
Thus, both ETFs are Hadamard.
\end{proof}

Generally speaking, the significance of Theorem~\ref{theorem.Naimark complement of Kirkman ETF} is that it shows that the fundamental idea of harmonic ETFs---that ETFs can be constructed by carefully extracting rows from a possibly-complex Hadamard matrix---can be successful even when the matrix is not the character table of an abelian group.
For example, taking $u=12$ gives a Hadamard matrix of size $576$ from which $276$ special rows can be extracted to form a real ETF; in contrast, the character table of any abelian group of order $576=2^6 3^2$ contains cube roots of unity.
This realization hopefully better informs searches for new ETFs in the future.

That said, the most immediate contribution of Theorem~\ref{theorem.Naimark complement of Kirkman ETF} is that it gives a new infinite family of real flat ETFs.
Indeed, prior to this result, the only known real flat ETFs with parameters of the form $(u(2u+1),4u^2)$ had $u=2^e$ for some $e\geq0$;
such ETFs arise, for example, from the complements of McFarland difference sets in \smash{$\bbZ_2^{2(e+1)}$}~\cite{DingF07}.
We now know they exist whenever there exists a Hadamard matrix of size $u$.
An infinite number of these ETFs are new: for example,
there are an infinite number of prime powers $q$ that are congruent to $1$ modulo $4$, and for each of these, Paley's method yields a Hadamard matrix of size $u=2(q+1)$, which is not a power of $2$.

\section{Real flat equiangular tight frames and quasi-symmetric designs}

In this section, we combine ideas from~\cite{McGuire97} and~\cite{JasperMF14} to give a new characterization of real flat ETFs in terms of particular types of BIBDs known as \textit{quasi-symmetric designs} (QSDs).
As we shall see, this characterization further generalizes to a particular class of real ETFs that are not necessarily flat, namely those that arise from the \textit{block graphs} of QSDs~\cite{GoethalsS70} via a well-known equivalence between real ETFs and certain \textit{strongly regular graphs} (SRGs)~\cite{StrohmerH03,HolmesP04,Waldron09}.

To be precise, a $\BIBD(v,r,\lambda,r,b)$ with $b>v$ is a QSD with \textit{block intersection numbers} $y>x\geq 0$ if the cardinality of the intersection of any two distinct blocks is either $x$ or $y$.
That is, a BIBD with $b>v$ is a $\QSD(v,k,\lambda,r,b,x,y)$ if its $b\times v$ incidence matrix $\bfX$ satisfies
\begin{equation}
\label{equation.adjacency matrix of block graph}
\bfX\bfX^\rmT
=(k-x)\bfI+(y-x)\bfA+x\bfJ
\end{equation}
for some $\set{0,1}$-valued matrix $\bfA$.
Here, $\bfA$ is the adjacency matrix of a graph known as the \textit{block graph} of the QSD.
One simple example of a QSD is a BIBD with $b>v$ and $\lambda=1$:
in such a design, any two distinct blocks have at most one vertex in common,
and so it is a QSD with $x=0$ and $y=1$.
Another common way to construct a QSD is to take the \textit{complementary design} of another QSD,
namely to consider the incidence matrix $\bfJ-\bfX$ instead of $\bfX$.

In our arguments below, we use the well-known fact that any QSD realizes both of its intersection numbers $\set{x,y}$,
namely that its corresponding block graph is neither complete nor empty.
Indeed, if we instead have a BIBD in which any two distinct blocks intersect in exactly $x$ vertices where $x<k$,
then $\bfX\bfX^\rmT=(k-x)\bfI+x\bfJ$ is positive-definite,
contradicting the underlying assumption that $b>v$.
(BIBDs with $b=v$ are instead called \textit{symmetric designs}.)
We shall also make use of the fact that the parameters of a QSD are dependent~\cite{Shrikhande07}, satisfying
\begin{equation}
\label{equation.QSD parameter relationship}
k(r-1)(x+y-1)-xy(b-1)=k(k-1)(\lambda-1).
\end{equation}

It is known that the block graph of any QSD is an SRG~\cite{GoethalsS70,Shrikhande07}.
To elaborate, a graph on $b$ vertices is an $\SRG(b,a,c,\mu)$ if any vertex has $a$ neighbors,
any two adjacent vertices have $c$ neighbors in common, and any two nonadjacent vertices have $\mu$ neighbors in common,
namely when its adjacency matrix $\bfA$ satisfies $\bfA\bfone=a\bfone$ and $\bfA^2=(c-\mu)\bfA+(a-\mu)\bfI+\mu\bfJ$.
Using~\eqref{equation.BIBD properties},
it is straightforward to show that the matrix $\bfA$ defined by~\eqref{equation.adjacency matrix of block graph} satisfies such a relationship where
\begin{equation}
\label{equation.QSD SRG parameters}
a=\tfrac{k(r-1)-x(b-1)}{y-x},
\quad
\theta_1=\tfrac{(r-\lambda)-(k-x)}{y-x},
\quad
\theta_2=-\tfrac{k-x}{y-x},
\quad
c=a+\theta_1+\theta_2+\theta_1\theta_2,
\quad
\mu=a+\theta_1\theta_2.
\end{equation}
As with any SRG, these parameters are dependent~\cite{Brouwer07}, satisfying
\begin{equation}
\label{equation.SRG parameter relationship}
a(a-c-1)=\mu(b-a-1).
\end{equation}

It is also known that real ETFs are equivalent to a certain class of SRGs~\cite{StrohmerH03,HolmesP04,Waldron09}.
In particular, as detailed in~\cite{FickusJMP17},
real ETFs correspond to SRGs whose parameters $(b,a,c,\mu)$ satisfy $a=2\mu$;
in this case, the corresponding real ETF has parameters
\begin{equation}
\label{equation.ETF from SRG parameters}
n=b+1,
\quad
d=\tfrac{b+1}{2}\Bigbracket{1+\tfrac{b-2a-1}{\sqrt{(b-2a-1)^2+4b}}}.
\end{equation}

Putting all of these facts together,
it is natural to ask which class of QSDs leads to SRGs which equate to real ETFs.
By searching tables of known SRGs~\cite{Brouwer07,Brouwer17},
we find some instances where this occurs.
For example, there exists a $\BIBD(15,3,1,7,35)$;
having $\lambda=1$, this design is also a QSD with $(x,y)=(0,1)$.
By~\eqref{equation.QSD SRG parameters},
the corresponding block graph is an $\SRG(35,18,9,9)$.
Since the parameters of this SRG satisfy $a=18=2(9)=2\mu$,
it indeed corresponds to a real ETF;
by~\eqref{equation.ETF from SRG parameters}, this ETF has parameters $(d,n)=(15,36)$.
Note here that $d=15=v$.
This is not a coincidence: using~\eqref{equation.BIBD parameters relationship}, \eqref{equation.QSD parameter relationship}, \eqref{equation.QSD SRG parameters}, \eqref{equation.SRG parameter relationship} and~\eqref{equation.ETF from SRG parameters},
the interested reader can show that $d=v$ for any real ETF that arises from an $\SRG(b,a,c,\mu)$ with $a=2\mu$ that itself arises from a $\QSD(v,k,\lambda,r,b,x,y)$.
This inspires us to seek a relationship between the $b\times v$ incidence matrix $\bfX$ of such a QSD, and the $v\times(b+1)$ synthesis operator of the ETF synthesis operator it generates.
In particular, in Theorem~\ref{theorem.QSD} we characterize when there exists scalars $\delta$ and $\varepsilon$ such that
$\bfPhi=\left[\begin{array}{cc}\bfone&\delta\bfJ+\varepsilon\bfX^\rmT\end{array}\right]$ is the synthesis operator of an ETF.

Combining results from the existing literature reveals a second connection between QSDs and ETFs.
In particular, \cite{McGuire97} establishes an equivalence between a certain class of QSDs and  self-complementary binary codes that achieve equality in the Grey-Rankin bound.
In~\cite{JasperMF14}, it was shown that these same codes are equivalent to real flat ETFs.
Together, these two results imply an equivalence between real flat ETFs and a particular class of QSDs.
As we shall see, it turns out that this equivalence is a special case of that described in the previous paragraph, namely when the scalars $\delta$ and $\varepsilon$ can be chosen to be $1$ and $-2$, respectively.
The analysis here is delicate:
a $\QSD(15,3,1,7,35,0,1)$ exists and yields a real ETF with $(d,n)=(15,36)$.
However, as we shall see, such an ETF cannot be flat.
Meanwhile, a $\QSD(6,2,1,5,15,0,1)$ exists and yields a real flat ETF with $(d,n)=(6,16)$.

\begin{theorem}
\label{theorem.QSD}
For any $\delta,\varepsilon\in\bbR$ and any $\set{0,1}$-valued $b\times v$ matrix $\bfX$ with $b>v>1$, let
\begin{equation*}
\bfPhi=\left[\begin{array}{cc}\bfone&\delta\bfJ+\varepsilon\bfX^\rmT\end{array}\right].
\end{equation*}
There exists a choice of $\delta,\varepsilon\in\bbR$ such that the columns $\set{\bfphi_j}_{j=1}^{b+1}$ of $\bfPhi$ form an ETF for $\bbR^v$ with $\ip{\bfphi_1}{\bfphi_j}>0$ for all $j$ if and only if $\bfX$ is the incidence matrix of a $\QSD(v,k,\lambda,r,b,x,y)$ with $0<k<v$ whose parameters satisfy the following relationships:
\begin{equation}
\label{equation.QSD parameters}
w=\bigbracket{\tfrac{v(b+1-v)}{b}}^{\frac12},
\quad
r=\tfrac{bk}{v},
\quad
\lambda=\tfrac{r(k-1)}{v-1},
\quad
x=k-\tfrac{(v+w)(r-\lambda)}{b+1},
\quad
y=k-\tfrac{(v-w)(r-\lambda)}{b+1}.
\end{equation}
Specifically, there are two choices for $(\delta,\varepsilon)$ here:
\begin{equation}
\label{equation.QSD alpha beta}
\delta
=\tfrac1{v}[w\pm k(\tfrac{b+1}{r-\lambda})^{\frac12}],
\qquad
\varepsilon
=\tfrac1k(w-\delta v)
=\mp(\tfrac{b+1}{r-\lambda})^{\frac12}.
\end{equation}

In particular, if $n-1>d>1$ and $\bfPhi$ is any $\set{\pm1}$-valued $d\times n$ matrix whose rows and columns have been signed to assume without loss of generality that $\bfPhi=\left[\begin{array}{cc}\bfone&\bfJ-2\bfX^\rmT\end{array}\right]$ where $\bfX$ is $\set{0,1}$-valued and $\ip{\bfphi_1}{\bfphi_j}\geq 0$ for all $j$,
then the columns of $\bfPhi$ form an ETF for $\bbR^d$ if and only if $\bfX$ is the incidence matrix of a $\QSD(v,k,\lambda,r,b,x,y)$ with $v=d$, $b=n-1$, and $k=\tfrac{v-w}{2}$ where $w$, $r$ and $\lambda$ are given by~\eqref{equation.QSD parameters} and $x=\tfrac{v-3w}4$, $y=\tfrac{v-w}4$.
In this case, $d$ and $w$ are necessarily even, and $n$ is necessarily divisible by $4$.
\end{theorem}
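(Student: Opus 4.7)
The plan is to translate each ETF condition --- tightness, equal column norms, and equiangularity together with the sign constraint $\ip{\bfphi_1}{\bfphi_j} > 0$ --- on $\bfPhi = \bracket{\bfone\ \ \delta\bfJ + \varepsilon\bfX^\rmT}$ into combinatorial conditions on $\bfX$. The two central identities are $(\bfPhi\bfPhi^\rmT)(i,i') = 1 + b\delta^2 + \delta\varepsilon(r_i + r_{i'}) + \varepsilon^2(\bfX^\rmT\bfX)(i,i')$, where $r_i = \sum_j \bfX(j,i)$ is the column sum, and $\ip{\bfphi_{j+1}}{\bfphi_{j'+1}} = v\delta^2 + \delta\varepsilon(k_j + k_{j'}) + \varepsilon^2\abs{B_j \cap B_{j'}}$, where $k_j$ is the $j$th row sum and $B_j = \set{i : \bfX(j,i) = 1}$ is the corresponding ``block.''

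In the generic regime $\varepsilon(2\delta + \varepsilon) \neq 0$, requiring $\bfPhi\bfPhi^\rmT = \alpha\bfI_v$ forces constant column sums $r_i = r$ and constant off-diagonal entries $(\bfX^\rmT\bfX)(i,i') = \lambda$, while equal norms force constant row sums $k_j = k$; hence $\bfX$ is the incidence matrix of a $\BIBD(v,k,\lambda,r,b)$. In the degenerate regime $2\delta + \varepsilon = 0$ (which covers the flat case $(\delta,\varepsilon) = (1,-2)$), equal norms are automatic, but $\ip{\bfphi_1}{\bfphi_{j+1}} = v\delta + \varepsilon k_j$ combined with equiangularity and positivity still forces $k_j = k$, and summing the off-diagonal tightness identity over $i'$ then forces $r_i = r$ (using $b > v$ to ensure $2k \neq v$). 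Either way, $\bfX$ is a BIBD. Equiangularity of the pairwise inner products then forces the affine function of $\abs{B_j \cap B_{j'}}$ above to lie in $\set{\pm(v\delta + \varepsilon k)}$, so the pairwise block intersection sizes take at most two values $x \leq y$; the hypothesis $b > v$ together with the rank argument recalled in the paper's discussion of QSDs rules out the one-value case, so $\bfX$ is a $\QSD(v,k,\lambda,r,b,x,y)$.

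Next I would solve the resulting system for the parameters. Writing $\gamma := v\delta + \varepsilon k > 0$, and adding and subtracting the two equiangularity equations (one for $\abs{B_j \cap B_{j'}} = x$ giving $-\gamma$, one for $= y$ giving $+\gamma$) yields $\varepsilon^2(y-x) = 2\gamma$ and $2v\delta^2 + 4\delta\varepsilon k + \varepsilon^2(x+y) = 0$. Combining with the off-diagonal tightness identity $1 + b\delta^2 + 2r\delta\varepsilon + \lambda\varepsilon^2 = 0$ and the BIBD relations~\eqref{equation.BIBD parameters relationship} produces $\gamma = w = \bracket{v(b+1-v)/b}^{1/2}$, $\varepsilon^2 = (b+1)/(r-\lambda)$, and the stated formulas~\eqref{equation.QSD parameters} and~\eqref{equation.QSD alpha beta}; the two sign choices for $\varepsilon$ correspond to swapping the roles of $x$ and $y$ in the pairing with $\pm\gamma$, but both preserve $\gamma = w > 0$. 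The converse direction --- that any QSD with these parameters yields an ETF via~\eqref{equation.QSD alpha beta} --- reverses the same algebra.

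For the final ``in particular'' statement, setting $(\delta,\varepsilon) = (1,-2)$ in~\eqref{equation.QSD alpha beta} yields $w = v - 2k$, so $k = (v-w)/2$; substituting into~\eqref{equation.QSD parameters} gives $x = (v-3w)/4$ and $y = (v-w)/4$. Since $x - y = -w/2$ must be an integer, $w$ is even, hence $v = d$ is also even (from $y$ integer), while $\varepsilon^2 = (b+1)/(r-\lambda) = 4$ forces $r - \lambda = n/4$, so $4 \mid n$. The main obstacle, as I see it, is not conceptual but bookkeeping: carefully handling the $2\delta + \varepsilon = 0$ degenerate case in the reduction step (where equal norms alone do not force BIBD structure), and consistently matching the two sign choices in~\eqref{equation.QSD alpha beta} with the pairing of $\pm\gamma$ to the intersection sizes $x$ and $y$ so that the positivity constraint $\ip{\bfphi_1}{\bfphi_j} > 0$ unambiguously selects the correct branch.
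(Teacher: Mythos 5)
Your proposal is correct in substance and arrives at the same characterization, but it organizes the reduction differently from the paper, and the comparison is worth making. The paper never performs a case analysis: it invokes the fact that equal-norm vectors achieve equality in the Welch bound if and only if they form an ETF, so the whole ETF property is encoded at once as $\norm{\bfphi_j}^2=v$ and $\abs{\ip{\bfphi_j}{\bfphi_{j'}}}=w$ for $j\neq j'$. Positivity against $\bfphi_1$ then gives constant block size $k$ (and kills $\varepsilon=0$), the norm condition becomes a single quadratic in $\delta$, namely~\eqref{equation.proof of QSD 4}, whose discriminant yields~\eqref{equation.QSD alpha beta} uniformly---including the flat root $(1,-2)$, which in your scheme lives in the degenerate regime $2\delta+\varepsilon=0$---and equiangularity yields the two intersection values. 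Tightness, automatic from Welch equality, is used only at the very end to force constant replication $r'=r$ and constant pair-counts $\lambda'=\lambda$, i.e.\ the BIBD regularity without which the conclusion ``QSD'' would be unjustified. You instead take tightness as a primary equation and derive the BIBD structure first, which is what forces your split on $\varepsilon(2\delta+\varepsilon)$; your rescue of the degenerate case (constant $k_j$ from positivity and equiangularity, then summing the off-diagonal tightness identity over $i'$ and using $b>v$ to exclude $2k=v$, since $2k=v$ would force $b=v-1$) is sound and in fact mirrors the paper's final tightness step. The paper's route buys uniformity and gets $\gamma=w$ for free, since $w$ is built in from the start; your route has the virtue of making explicit that the QSD regularity genuinely comes from tightness. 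Two small repairs: your case split as literally stated omits $\varepsilon=0$ (trivially dismissed, collinear columns since $v>1$), and for the converse of the flat statement you will need the paper's observation that $v=w+2k$ alone forces $\tfrac{b+1}{r-\lambda}=4$ via the discriminant identity~\eqref{equation.proof of QSD 5}, so that the single condition $k=\tfrac{v-w}2$ suffices and $(1,-2)$ is indeed admissible.

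One aside in your sketch is wrong, though harmlessly so: the two sign choices in~\eqref{equation.QSD alpha beta} do not swap the pairing of $x$ and $y$ with $\mp\gamma$. Both roots give the same $\varepsilon^2=\tfrac{b+1}{r-\lambda}$ and, by a short computation, the same value $v\delta^2+2\delta\varepsilon k=\tfrac1v\bigl(w^2-k^2\tfrac{b+1}{r-\lambda}\bigr)$, so the affine map from intersection size to inner product is identical for both choices; since its slope $\varepsilon^2$ is positive, $y$ always pairs with $+w$ and $x$ with $-w$. The branch-matching difficulty you flag in your final paragraph therefore does not arise: positivity pins $\ip{\bfphi_1}{\bfphi_{j+1}}=+w$ under either choice, and both $(\delta,\varepsilon)$ pairs yield the same QSD parameters, exactly as the paper asserts.
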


\begin{proof}
To simplify notation, let $\bfZ=\bfX^\rmT$.
For any $\delta,\varepsilon\in\bbR$,
the fact that $\norm{\bfphi_1}^2=\norm{\bfone}^2=v$ implies that $\set{\bfphi_j}_{j=1}^{b+1}$ is an ETF for $\bbR^v$ if and only if
\begin{equation}
\label{equation.proof of QSD 1}
\abs{\ip{\bfphi_j}{\bfphi_{j'}}}
=\left\{\begin{array}{cl}v,&j=j',\\w,&j\neq j',\end{array}\right.
\qquad
\forall j,j'=1,\dotsc,b+1,
\end{equation}
where $w$, as defined in~\eqref{equation.QSD parameters}, is obtained by scaling the Welch bound for $n=b+1$ and $d=v$ by a factor of $\norm{\bfphi_1}^2=v$.
Letting $\set{\bfz_j}_{j=1}^{b}$ denote the columns of $\bfZ$,
we have $\bfphi_{j+1}=\delta\bfone+\varepsilon\bfz_j$ for all $j=1,\dotsc,b$.
Having the additional property that $\ip{\bfphi_1}{\bfphi_j}>0$ for all $j$ equates to having
\begin{equation*}
w
=\ip{\bfphi_1}{\bfphi_{j+1}}
=\ip{\bfone}{\delta\bfone+\varepsilon\bfz_j}
=\delta v+\varepsilon\ip{\bfone}{\bfz_j},
\qquad
\forall j=1,\dotsc,b.
\end{equation*}
If $\varepsilon=0$, the columns of $\bfPhi$ are collinear,
meaning they are not a tight frame for $\bbR^v$ since $v>1$.
When $\varepsilon\neq0$, the above equation gives that \smash{$\ip{\bfone}{\bfz_j}=\frac1{\varepsilon}(w-\delta v)$} for all $j=1,\dotsc,b$, meaning in particular that each column of $\bfZ$ contains exactly $k=\frac1{\varepsilon}(w-\delta v)$ ones.
Here, $k$ is an integer satisfying $0<k<v$ since having either $k=0$ or $k=v$ again implies that the columns of $\bfPhi$ are collinear.
Moreover, in this case, the vectors $\set{\bfphi_j}_{j=1}^{b+1}=\set{\bfone}\cup\set{\delta\bfone+\varepsilon\bfz_j}_{j=1}^{b}$
satisfy~\eqref{equation.proof of QSD 1} if and only if for every $j,j'=1,\dotsc,b$, $j\neq j'$ we have
\begin{align}
\label{equation.proof of QSD 2}
v&=\norm{\delta\bfone+\varepsilon\bfz_j}^2
=\delta^2\norm{\bfone}^2+2\delta\varepsilon\ip{\bfone}{\bfz_j}+\varepsilon^2\norm{\bfz_j}^2
=\delta^2v+(2\delta\varepsilon+\varepsilon^2)k,\\
\label{equation.proof of QSD 3}
w
&=\abs{\ip{\delta\bfone+\varepsilon\bfz_j}{\delta\bfone+\varepsilon\bfz_{j'}}}
=\abs{\delta^2 v+2\delta\varepsilon k+\varepsilon^2\ip{\bfz_j}{\bfz_{j'}}}.
\end{align}

To summarize, there exists $\delta,\varepsilon\in\bbR$ such that
$\set{\bfphi_j}_{j=1}^{b+1}=\set{\bfone}\cup\set{\delta\bfone+\varepsilon\bfz_j}_{j=1}^{b}$
is an ETF for $\bbR^v$ with $\ip{\bfphi_1}{\bfphi_j}>0$ for all $j$ if and only if there exists
$\delta\in\bbR$ and an integer $k$, $0<k<v$ such that letting $\varepsilon=\tfrac1k(w-\delta v)$ we have that $\delta$ and $\varepsilon$ satisfy~\eqref{equation.proof of QSD 2},
that $\set{\bfz_j}_{j=1}^{b}$ satisfies $\ip{\bfone}{\bfz_j}=k$ for all $j$,
and that $\set{\bfz_j}_{j=1}^{b}$ satisfies~\eqref{equation.proof of QSD 3} for all $j\neq j'$.
Continuing, note that since $\varepsilon=\frac{w-\delta v}{k}$, \eqref{equation.proof of QSD 2} becomes
\begin{equation*}
v
=\delta^2v+(2\delta\varepsilon+\varepsilon^2)k
=\delta^2v+2\delta(w-\delta v)+\tfrac{1}{k}(w-\delta v)^2,
\end{equation*}
which is equivalent to having
\begin{equation}
\label{equation.proof of QSD 4}
0
=\tfrac12 v\delta^2-w\delta-\tfrac{vk-w^2}{2(v-k)}.
\end{equation}
To find the roots of this equation, it helps to define $r$ and $\lambda$ as in~\eqref{equation.QSD parameters};
since $0<k<v$, these definitions imply $0\leq\lambda<r<b$ with
\begin{equation*}
0
<r-\lambda
=(1-\tfrac{k-1}{v-1})r
=\tfrac{v-k}{v-1}r
=\tfrac{bk(v-k)}{v(v-1)}.
\end{equation*}
This fact and the definition of $w$ imply that the discriminant of \eqref{equation.proof of QSD 4} is the positive quantity
\begin{equation}
\label{equation.proof of QSD 5}
w^2
+v\tfrac{vk-w^2}{v-k}
=\tfrac{k(v^2-w^2)}{v-k}
=\tfrac{k}{v-k}\bigbracket{v^2-\tfrac{v}{b}(b+1-v)}
=\tfrac{kv(v-1)(b+1)}{b(v-k)}
=k^2\tfrac{b+1}{r-\lambda},
\end{equation}
meaning~\eqref{equation.proof of QSD 4} has two real roots, each of which leads to a corresponding value of $\varepsilon$,
namely those paired values of $\delta$ and $\varepsilon$ given in~\eqref{equation.QSD alpha beta}.

Having characterized when~\eqref{equation.proof of QSD 2} is satisfied, we turn to~\eqref{equation.proof of QSD 3}:
in light of~\eqref{equation.QSD alpha beta} and~\eqref{equation.proof of QSD 4},
\begin{align}
\nonumber
\delta^2 v+2\delta\varepsilon k+\varepsilon^2\ip{\bfz_j}{\bfz_{j'}}
&=\delta^2 v+2\delta(w-\delta v)+\tfrac{b+1}{r-\lambda}\ip{\bfz_j}{\bfz_{j'}}\\
\nonumber
&=-2(\tfrac12 v\delta^2-w\delta)+\tfrac{b+1}{r-\lambda}\ip{\bfz_j}{\bfz_{j'}}\\
\label{equation.proof of QSD 6}
&=-\tfrac{vk-w^2}{v-k}+\tfrac{b+1}{r-\lambda}\ip{\bfz_j}{\bfz_{j'}}.
\end{align}
Repurposing~\eqref{equation.proof of QSD 5} further gives
\begin{equation*}
\tfrac{vk-w^2}{v-k}
=\tfrac{v[v-(v-k)]-w^2}{v-k}
=\tfrac{v^2-w^2}{v-k}-v
=k\tfrac{b+1}{r-\lambda}-v.
\end{equation*}
Substituting this into \eqref{equation.proof of QSD 6},
we see that \eqref{equation.proof of QSD 3} is satisfied when
\begin{equation*}
v-k\tfrac{b+1}{r-\lambda}+\tfrac{b+1}{r-\lambda}\ip{\bfz_j}{\bfz_{j'}}
=\delta^2 v+2\delta\varepsilon k+\varepsilon^2\ip{\bfz_j}{\bfz_{j'}}
\in\set{-w,w},
\end{equation*}
namely when $\ip{\bfz_j}{\bfz_{j'}}\in\set{x,y}$ where $x$ and $y$ are defined in \eqref{equation.QSD alpha beta}.

To summarize,
there exists $\delta,\varepsilon\in\bbR$ such that
$\set{\bfphi_j}_{j=1}^{b+1}=\set{\bfone}\cup\set{\delta\bfone+\varepsilon\bfz_j}_{j=1}^{b}$ is an ETF for $\bbR^v$ with $\ip{\bfphi_1}{\bfphi_j}>0$ for all $j$ if and only if there exists an integer $k$, $0<k<v$ such that $\ip{\bfone}{\bfz_j}=k$ for all $j$
and such that $\ip{\bfz_j}{\bfz_{j'}}\in\set{x,y}$ for all $j\neq j'$,
under the definitions given in~\eqref{equation.QSD alpha beta}.
In this case, there are two choices for $(\delta,\varepsilon)$, namely the values given in \eqref{equation.QSD alpha beta}.
This characterization is complete.
However, it differs from the characterization given in the statement of the result since we have not yet used the fact that any vectors that meet the Welch bound are necessarily tight,
a fact which pertains to the rows of $\bfZ$.
In particular, under the above hypotheses, we have $\bfPhi\bfPhi^*=\alpha\bfI$ where $\alpha v=\Tr(\alpha\bfI)=\Tr(\bfPhi\bfPhi^*)=\Tr(\bfPhi^*\bfPhi)=\sum_{j=1}^{b+1}\norm{\bfphi_j}^2=(b+1)v$, implying $\alpha=b+1$.
As such,
\begin{equation*}
(b+1)\bfone
=\bfPhi\bfPhi^*\bfone
=\sum_{j=1}^{b+1}\ip{\bfphi_j}{\bfone}\bfphi_j
=v\bfone+w\sum_{j=1}^{b}(\delta\bfone+\varepsilon\bfz_j)
=(v+bw\delta)\bfone+w\varepsilon\sum_{j=1}^{b}\bfz_j.
\end{equation*}
This implies that each row of $\bfZ$ sums to \smash{$r':=\tfrac1{w\varepsilon}(b+1-v-bw\delta)$};
since $\bfZ$ is a $\set{0,1}$-valued $v\times b$ matrix whose columns sum to $k$,
we have $vr'=bk=vr$ and so $r'=r$.
This in turn implies that for any $i,i'=1,\dotsc,v$, $i\neq i'$,
\begin{equation*}
0
=(\bfPhi\bfPhi^*)(i,i')
=1+\sum_{j=1}^{b}[\delta+\varepsilon\bfZ(i,j)][\delta+\varepsilon\bfZ(i',j)]
=1+\delta^2 b+2\delta\varepsilon r+\varepsilon^2(\bfZ\bfZ^\rmT)(i,i'),
\end{equation*}
meaning that the dot product of any two distinct rows of $\bfZ$ is $\lambda'=-\tfrac1{\varepsilon^2}(1+\delta^2 b+2\delta\varepsilon r)$.
As such, $\bfZ$ is the $v\times b$ incidence matrix of a $\BIBD(v,k,\lambda')$ implying $\lambda'(v-1)=k(r-1)=\lambda(v-1)$ and so $\lambda'=\lambda$.
(The interested reader can also use~\eqref{equation.QSD alpha beta} to directly show that $r'=r$ and $\lambda'=\lambda$.)

As such, if there exists $\delta,\varepsilon\in\bbR$ such that
\smash{$\set{\bfphi_j}_{j=1}^{b+1}=\set{\bfone}\cup\set{\delta\bfone+\varepsilon\bfz_j}_{j=1}^{b}$} is an ETF for $\bbR^v$ with $\ip{\bfphi_1}{\bfphi_j}>0$,
then \smash{$\bfX=\bfZ^\rmT$} is the incidence matrix of a $\QSD(v,k,\lambda,r,b,x,y)$ with $0<k<v$.
Conversely, if \smash{$\bfX=\bfZ^\rmT$} is the incidence matrix of such a design,
then $\ip{\bfone}{\bfz_j}=k$ for all $j$ and $\ip{\bfz_j}{\bfz_{j'}}\in\set{x,y}$ for all $j\neq j'$, implying that $\set{\bfphi_j}_{j=1}^{b+1}=\set{\bfone}\cup\set{\delta\bfone+\varepsilon\bfz_j}_{j=1}^{b}$ is such an ETF provided $\delta$ and $\varepsilon$ are chosen according to~\eqref{equation.QSD alpha beta}.

For the next set of conclusions, let $\bfPhi$ be any $\set{\pm1}$-valued $n\times d$ matrix where $n-1>d>1$.
To determine when the columns $\set{\bfphi_j}_{j=1}^{n}$ of $\bfPhi$ form an ETF for $\bbR^d$,
note that by signing the rows of $\bfPhi$ we can assume without loss of generality that $\bfphi_1=\bfone$.
Moreover, by signing the columns of $\bfPhi$ we can further assume without loss of generality that $\ip{\bfphi_1}{\bfphi_j}\geq0$ for all $j$.
Any such matrix is of the form
$\bfPhi=\left[\begin{array}{cc}\bfone&\bfJ-2\bfZ\end{array}\right]$
where $\bfZ$ is a $\set{0,1}$-valued $v\times b$ matrix where $v=d$, $b=n-1$.
By what we have already seen,
$\set{\bfphi_j}_{j=1}^{n}=\set{\bfone}\cup\set{\bfone-2\bfz_j}_{j=1}^{b}$
is an ETF for $\bbR^d$ if and only if there exists an integer $k$, $0<k<v$ such that $\bfZ^\rmT$ is the incidence matrix of a QSD whose remaining parameters are given by~\eqref{equation.QSD parameters},
provided~\eqref{equation.QSD alpha beta} allows $(\delta,\varepsilon)=(1,-2)$, that is, if and only if
\begin{equation*}
1
=\delta
=\tfrac1{v}[w+k(\tfrac{b+1}{r-\lambda})^{\frac12}],
\qquad
-2
=\varepsilon
=-(\tfrac{b+1}{r-\lambda})^{\frac12}.
\end{equation*}
This occurs precisely when $\tfrac{b+1}{r-\lambda}=4$ and $v=w+2k$.
These two conditions are redundant:
if $v=w+2k$ then again repurposing~\eqref{equation.proof of QSD 5} gives
\begin{equation*}
\tfrac{b+1}{r-\lambda}
=\tfrac1{k^2}\tfrac{k(v^2-w^2)}{v-k}
=\tfrac{v^2-w^2}{k(v-k)}
=\tfrac{(w+2k)^2-w^2}{k(w+k)}
=4.
\end{equation*}
As such, the columns of $\bfPhi=\left[\begin{array}{cc}\bfone&\bfJ-2\bfZ\end{array}\right]$ form an ETF for $\bbR^d$ if and only if letting $k=\tfrac{v-w}{2}$ we have that $\bfZ^\rmT$ is the incidence matrix of a QSD whose remaining parameters are given by~\eqref{equation.QSD parameters}.
Here, since $k=\tfrac{v-w}{2}$ and $\tfrac{b+1}{r-\lambda}=4$, our expressions for $x$ and $y$ simplify to
\begin{equation*}
x=k-\tfrac{(v+w)(r-\lambda)}{b+1}=\tfrac{v-w}{2}-\tfrac{v+w}4=\tfrac{v-3w}4,
\qquad
y=k-\tfrac{(v-w)(r-\lambda)}{b+1}=\tfrac{v-w}{2}-\tfrac{v-w}4=\tfrac{v-w}4.
\end{equation*}
In this case, we have $n=b+1=4(r-\lambda)$ is divisible by $4$.
Moreover, since $x$ and $y$ are both integers,
then so are $y-x=\frac{w}{2}$ and $y+x=\frac{v}2-w$,
implying $w$ and $v$ are even integers.
\end{proof}

We note that Theorem~\ref{theorem.QSD} specifically excludes ETFs in which $n=d+1$, that is, regular simplices.
We did this because regular simplices are already well understood,
and because the corresponding incidence matrices are square.
In particular, as discussed in Section~2,
the existence of a flat regular simplex for $\bbR^d$ is equivalent to that of a Hadamard matrix of size $d+1$.
In this case, the arguments of the proof of Theorem~\ref{theorem.QSD} do not produce a QSD,
but rather a symmetric design with $k=\frac{v-1}2$ and $x=\lambda=\frac{v-3}4$.
For example, to put the real flat tetrahedron of~\eqref{equation.3x4 real flat regular simplex}
in the form where Theorem~\ref{theorem.QSD} applies, we negate all but its first column,
giving the following real flat ETF and corresponding symmetric design:
\begin{equation*}
\bfPhi
=\left[\begin{array}{rrrr}
 1& 1&-1& 1\\
 1&-1& 1& 1\\
 1& 1& 1&-1\\
 \end{array}\right],
\quad
\bfX^\rmT
=\left[\begin{array}{rrrr}
 0& 1& 0\\
 1& 0& 0\\
 0& 0& 1\\
 \end{array}\right].
\end{equation*}
In this situation, no two distinct blocks intersect in $y=\frac{v-1}4$ points,
giving no guarantee that this is an integer and by extension no guarantee that $v$ is even.
In fact, since in this case there necessarily exists a Hadamard matrix of size $v+1$, we have that $v$ is necessarily odd.
This explains how Theorem~\ref{theorem.QSD} is consistent with the dichotomy of the statement of Theorem~A of~\cite{McGuire97}:
for any real flat ETF for $\bbR^d$,
we either have $d$ is odd---meaning the ETF is a regular simplex arising from a Hadamard matrix of size $d+1$---or $d$ is even, meaning the ETF equates to a QSD whose parameters are given by Theorem~\ref{theorem.QSD}.
This in particular means that not all of the ETFs produced by Theorem~\ref{theorem.QSD} are flat:
applying it to a $\QSD(15,3,1,7,35,0,1)$ gives a real ETF with $(d,n)=(15,36)$, but since $15$ is odd, this ETF is not flat.

We also mention that if a real ETF arises from the block graph of a QSD,
then Theorem~\ref{theorem.QSD} can be applied to that QSD to produce an explicit ETF with those parameters.
To be precise, for any QSD whose block graph's SRG parameters satisfy $a=2\mu$,
one can use~\eqref{equation.BIBD parameters relationship}, \eqref{equation.QSD parameter relationship}, \eqref{equation.QSD SRG parameters} and \eqref{equation.SRG parameter relationship} to show that $y+x=2[k-\tfrac{v(r-\lambda)}{b+1}]$ and $(y-x)^2=\bigbracket{\tfrac{2w(r-\lambda)}{b+1}}^2$.
(These calculations are nontrivial, and we performed them with the aid of a computer algebra system.)
This implies $y$ and $x$ are of the form given in~\eqref{equation.QSD parameters},
meaning Theorem~\ref{theorem.QSD} can be applied to that QSD to produce an ETF with parameters $(d,n)=(v,b+1)$.
As noted above, these parameters match those of the ETF that arises from the QSD's strongly regular block graph via~\eqref{equation.ETF from SRG parameters}.
In particular, one can show that a QSD yields a real ETF via its block graph if and only if its parameters satisfy~\eqref{equation.QSD parameters}.

\subsection{Quasi-symmetric designs from Kirkman ETFs and their complements.}

We now combine Theorem~\ref{theorem.QSD} with previously-known results to produce other new results.

\begin{corollary}
\label{corollary.existence of real flat}
Let $u$ be an integer with $u\geq 2$.
There exists a real flat ETF with parameters $(d,n)=(u(2u-1),4u^2)$ if and only if there exists a QSD with parameters
\begin{equation}
\label{equation.special QSD parameters -}
(v,k,\lambda,r,b,x,y)
=(2u^2-u,\ u^2-u,\ u^2-u-1,\ 2u^2-u-1,\ 4u^2-1,\ \tfrac{u(u-2)}2,\ \tfrac{u(u-1)}2).
\end{equation}
Such ETFs exist whenever there exists a Hadamard matrix of size $u$,
or alternatively, whenever there exists a Hadamard matrix of size $2u$ and $u-2$ mutually orthogonal Latin squares (MOLS) of size $2u$, such as when $u=6$.

Similarly, there exists a real flat ETF with parameters $(d,n)=(u(2u+1),4u^2)$ if and only if there exists a QSD with parameters
\begin{equation}
\label{equation.special QSD parameters +}
(v,k,\lambda,r,b,x,y)
=(2u^2+u,\ u^2,\ u^2-u,\ 2u^2-u,\ 4u^2-1,\ \tfrac{u(u-1)}2,\ \tfrac{u^2}2).
\end{equation}
Such ETFs exist whenever there exists a Hadamard matrix of size $u$,
or alternatively, whenever there exists a Hadamard matrix of size $2u$ and $u-1$ MOLS of size $2u$, such as when $u=6$.

In either case, $u$ is necessarily even.
Also, when there exists a Hadamard matrix of size $u$, these two ETFs can be chosen to be Naimark complements.
\end{corollary}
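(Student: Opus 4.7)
The plan splits into an equivalence part (via Theorem~\ref{theorem.QSD}) and two existence constructions.

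\textbf{The equivalence.} Applying Theorem~\ref{theorem.QSD} with $v=d=u(2u-1)$ and $b=n-1=4u^2-1$, a short calculation gives
\begin{equation*}
w=\biggbracket{\tfrac{(2u^2-u)(2u^2+u)}{4u^2-1}}^{1/2}=u,
\end{equation*}
after which $k=(v-w)/2=u^2-u$ and the expressions for $r$, $\lambda$, $x$, $y$ in Theorem~\ref{theorem.QSD} simplify to exactly the parameters in \eqref{equation.special QSD parameters -}. Repeating with $v=u(2u+1)$ again gives $w=u$ and yields \eqref{equation.special QSD parameters +}. Necessity of $u$ being even follows from the final sentence of Theorem~\ref{theorem.QSD}, which forces $d$ to be even, combined with the fact that $2u\pm1$ is always odd.

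\textbf{Existence via a Hadamard of size $u$.} This is essentially the last conclusion of Theorem~\ref{theorem.Naimark complement of Kirkman ETF}: given a Hadamard matrix of size $u$, that theorem produces a Naimark-complementary pair of real flat ETFs with parameters $(u(2u-1),4u^2)$ and $(u(2u+1),4u^2)$ (obtained by setting $v=2u$ and using the round-robin RBIBD together with $\bfG=\bfE\otimes\bfF$). Feeding each into the equivalence just established produces the required QSDs, and simultaneously establishes the final claim of the corollary that the two ETFs arise as Naimark complements of each other.

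\textbf{Existence via Hadamard of size $2u$ and MOLS.} The obstacle here is that when $u\equiv 2\pmod 4$ (in particular $u=6$), no Hadamard matrix of size $u$ is available, so Theorem~\ref{theorem.Naimark complement of Kirkman ETF} does not apply directly. The plan is to replace the tensor factor $\bfI_r\otimes\bfE$ used in that theorem by a more elaborate flattening operator built from $u-2$ MOLS of size $2u$ together with a Hadamard matrix of size $2u$: the MOLS furnish a resolvable transversal design $\operatorname{TD}(u,2u)$ whose resolution classes, in combination with the rows of the Hadamard of size $2u$, play the role previously played by $\bfE$ of size $u$. The most delicate step is verifying that the resulting matrix is still a real flat ETF with parameters $(u(2u-1),4u^2)$; this can be done either by a direct tensor computation mimicking the proof of Theorem~\ref{theorem.Naimark complement of Kirkman ETF}, or by showing that the construction outputs a $\{0,1\}$-valued matrix meeting the parameter constraints of \eqref{equation.special QSD parameters -} and then invoking Theorem~\ref{theorem.QSD} to pass from the QSD back to the ETF. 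The analogous construction using $u-1$ MOLS produces the complementary parameters in \eqref{equation.special QSD parameters +}. For $u=6$, a Hadamard matrix of size $12$ and five MOLS of order $12$ are classical, so both new QSDs and the corresponding real flat ETFs are obtained concretely.
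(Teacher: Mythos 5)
Your equivalence argument and the Hadamard-matrix-of-size-$u$ case are correct and essentially the paper's own: you compute $w=u$, $k=(v-w)/2$, and check that the remaining formulas in \eqref{equation.QSD parameters} collapse to \eqref{equation.special QSD parameters -} and \eqref{equation.special QSD parameters +}; evenness of $u$ follows from Theorem~\ref{theorem.QSD} (the paper reads it off the conclusion that $w=u$ is even, you read it off $d=u(2u\mp1)$ being even with $2u\mp1$ odd---equivalent); and the Naimark-complementary pair of real flat ETFs with parameters $(u(2u-1),4u^2)$ and $(u(2u+1),4u^2)$ comes from the final conclusion of Theorem~\ref{theorem.Naimark complement of Kirkman ETF}, exactly as you say, which also disposes of the corollary's last sentence.

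The genuine gap is the MOLS case. The paper proves nothing new there: it simply invokes Theorems~1 and~2 of \cite{BrackenMW06}, which construct QSDs with parameters \eqref{equation.special QSD parameters -} and \eqref{equation.special QSD parameters +} directly from a Hadamard matrix of size $2u$ and $u-2$ (resp.\ $u-1$) MOLS of size $2u$, and then feeds those QSDs through the equivalence to obtain the ETFs. You instead sketch a new flat-ETF construction in which the flattening operator $\bfI_r\otimes\bfE$ of Theorem~\ref{theorem.Naimark complement of Kirkman ETF} is replaced by an operator built from a resolvable transversal design, deferring the verification---and that deferred verification is not routine, it is where the proposal would fail. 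The Steiner/Kirkman machinery requires the underlying design to be a $\BIBD(v,k,1,r,b)$, i.e., $\bfX^\rmT\bfX=(r-\lambda)\bfI+\lambda\bfJ$ with $\lambda=1$ for \emph{every} pair of distinct vertices; a transversal design arising from MOLS violates this, since two vertices in a common group lie in no common block, so the key counting step in the proof of Theorem~\ref{theorem.Naimark complement of Steiner ETF}---exactly one nonzero summand in \eqref{equation.Naimark complement of Steiner ETF 3} when $j\neq j'$---breaks for same-group pairs, and ``a direct tensor computation mimicking the proof'' would not yield equiangularity. Your fallback, to exhibit a $\set{0,1}$-valued incidence matrix realizing \eqref{equation.special QSD parameters -} or \eqref{equation.special QSD parameters +} and invoke Theorem~\ref{theorem.QSD}, is precisely what \cite{BrackenMW06} accomplishes, but you supply no such construction, so the existence claims in the MOLS case (including $u=6$, where no Hadamard matrix of size $6$ exists and the other route is unavailable) remain unproven. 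Indeed, the paper's own remark in Section~5---that the ETFs arising from the QSDs of \cite{BrackenMW06} seem not to be Naimark complements by default---is circumstantial evidence that no Kirkman-style flat construction is known in this regime; the citation, not a new construction, is the intended argument.
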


\begin{proof}
If there exists a real flat ETF with parameters $(d,n)=(u(2u-1),4u^2)$,
then applying Theorem~\ref{theorem.QSD} to it produces a QSD with parameters~\eqref{equation.special QSD parameters -} and $w=u$.
Conversely, applying Theorem~\ref{theorem.QSD} to any QSD with parameters~\eqref{equation.special QSD parameters -} produces a real flat ETF with parameters $(d,n)=(u(2u-1),4u^2)$.
Similarly, there exists a real flat ETF with parameters $(u(2u+1),4u^2)$ if and only if there exists a QSD with parameters~\eqref{equation.special QSD parameters +};
here we again have $w=u$.
In either case, note Theorem~\ref{theorem.QSD} requires $w=u$ to be even.

If there exists a Hadamard matrix of size $u$,
\cite{Bracken06} gives a QSD with parameters~\eqref{equation.special QSD parameters -}.
For any such $u$,~\cite{JasperMF14} gives an independent method for constructing a real flat Kirkman ETF with parameters $(u(2u-1),4u^2)$.
The real flat Naimark complement of it constructed in Theorem~\ref{theorem.Naimark complement of Kirkman ETF} has parameters $(u(2u+1),4u^2)$.
As noted above, that ETF implies the existence of a QSD with parameters~\eqref{equation.special QSD parameters +}.
If we instead have a Hadamard matrix of size $2u$,
then Theorems~1 and~2 of~\cite{BrackenMW06} give QSDs with parameters~\eqref{equation.special QSD parameters -} and~\eqref{equation.special QSD parameters +} provided we also have $u-2$ or $u-1$ MOLS of size $2u$, respectively.
These conditions hold, for example, when $u=6$.
\end{proof}

As seen from this proof, the true novelty of Corollary~\ref{corollary.existence of real flat} is the existence of QSDs with parameters~\eqref{equation.special QSD parameters +} whenever there exists a Hadamard matrix of size $u$.
To our knowledge, the only previously known QSDs with parameters~\eqref{equation.special QSD parameters +} had either $u=2^e$ for some $e\geq0$ or $u=6$~\cite{BrackenMW06}.
As such, this result provides a new infinite family of such designs.

To be clear, Theorem~\ref{theorem.QSD} can be applied to any real flat ETF,
and these do not necessarily have parameters of the form $(u(2u-1),4u^2)$ or $(u(2u+1),4u^2)$.
In particular, if there exists an $\RBIBD(\hat{v},\hat{k},1,\hat{r},\hat{b})$ and Hadamard matrices of size $\hat{r}+1$ and \smash{$\frac{\hat{v}}{\hat{k}}$}, then~\cite{JasperMF14} gives a real flat Kirkman ETF with parameters $(d,n)=(\hat{b},\hat{v}(\hat{r}+1))$.
When $\hat{k}>2$, such an ETF is neither of the types characterized in Corollary~\ref{corollary.existence of real flat}.
Applying Theorem~\ref{theorem.QSD} to it and simplifying the expressions for the parameters of the resulting QSD gives the following result:

\begin{corollary}
\label{corollary.RBIBD and Hadamard imply QSD}
If there exists an $\RBIBD(\hat{v},\hat{k},1,\hat{r},\hat{b})$ and Hadamard matrices of size $\hat{r}+1$ and \smash{$\frac{\hat{v}}{\hat{k}}$},
then there exists a QSD with parameters
\begin{equation*}
(v,k,\lambda,r,b,x,y)
=(\hat{b},\tfrac{\hat{v}(\hat{r}-1)}{2\hat{k}},\tfrac{\hat{v}(\hat{r}-1)-2\hat{k}}{4},\tfrac{(\hat{r}-1)(\hat{v}+\hat{k}-1)}{2},\hat{r}(\hat{v}+\hat{k}-1),\tfrac{\hat{v}(\hat{r}-3)}{4\hat{k}},\tfrac{\hat{v}(\hat{r}-1)}{4\hat{k}}),
\quad
w=\tfrac{\hat{v}}{\hat{k}}.
\end{equation*}
\end{corollary}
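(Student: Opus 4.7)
The plan is to bootstrap the corollary from two earlier pieces: the Kirkman ETF construction of~\cite{JasperMF14}, and Theorem~\ref{theorem.QSD}. Given an $\RBIBD(\hat{v},\hat{k},1,\hat{r},\hat{b})$ together with Hadamard matrices of sizes $\hat{r}+1$ and $\frac{\hat{v}}{\hat{k}}$, \cite{JasperMF14} produces a real flat Kirkman ETF whose synthesis operator is $\hat{b}\times \hat{v}(\hat{r}+1)$, that is, a real flat ETF with parameters $(d,n)=(\hat{b},\hat{v}(\hat{r}+1))$. Since this ETF is real, flat, and satisfies $n-1>d>1$ in all nondegenerate cases, the hypotheses of the second part of Theorem~\ref{theorem.QSD} apply. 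Invoking that part produces a $\QSD(v,k,\lambda,r,b,x,y)$ whose parameters are given by~\eqref{equation.QSD parameters} with $v=d=\hat{b}$ and $b=n-1=\hat{v}(\hat{r}+1)-1$.

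It then remains to rewrite each of the QSD parameters in terms of the underlying RBIBD parameters $\hat{v},\hat{k},\hat{r},\hat{b}$. The arithmetic relies entirely on the BIBD identities $\hat{b}\hat{k}=\hat{v}\hat{r}$ and $\hat{v}-1=\hat{r}(\hat{k}-1)$ from~\eqref{equation.BIBD parameters relationship} (since $\lambda=1$). First I would compute $w^2=v(b+1-v)/b$, clear the fraction using $\hat{b}=\hat{v}\hat{r}/\hat{k}$, and recognize the factor $\hat{r}(\hat{v}+\hat{k}-1)$ in both numerator and denominator to collapse $w$ to $\hat{v}/\hat{k}$. Given this value of $w$, the formulas $k=\tfrac{v-w}{2}$, $x=\tfrac{v-3w}{4}$, $y=\tfrac{v-w}{4}$ immediately yield the claimed expressions $k=\tfrac{\hat{v}(\hat{r}-1)}{2\hat{k}}$, $x=\tfrac{\hat{v}(\hat{r}-3)}{4\hat{k}}$, $y=\tfrac{\hat{v}(\hat{r}-1)}{4\hat{k}}$ after multiplying through by $\hat{k}$ and using $\hat{b}\hat{k}=\hat{v}\hat{r}$.

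The remaining two parameters $r=bk/v$ and $\lambda=r(k-1)/(v-1)$ are handled similarly: substitute the already-simplified $k$ along with the identity $\hat{v}(\hat{r}+1)-1=\hat{r}(\hat{v}+\hat{k}-1)$ (obtained by expanding and applying $\hat{v}-1=\hat{r}(\hat{k}-1)$), and cancel the $\hat{v}\hat{r}=\hat{b}\hat{k}$ factor that appears. This yields $r=\tfrac{(\hat{r}-1)(\hat{v}+\hat{k}-1)}{2}$, and a further application of the same two BIBD identities shows $(\hat{r}-1)(\hat{v}+\hat{k}-1)=\hat{k}(\hat{b}-1)$, which collapses $\lambda$ to $\tfrac{\hat{v}(\hat{r}-1)-2\hat{k}}{4}$. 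Finally, $b=\hat{v}(\hat{r}+1)-1$ is rewritten as $\hat{r}(\hat{v}+\hat{k}-1)$ via the same identity. There is no conceptual obstacle; the only mildly delicate point is keeping the two BIBD identities organized so that each simplification step is transparent, and in particular recognizing the recurring factor $\hat{v}+\hat{k}-1$ as the common shape that ties the parameters $b$, $r$, and $\lambda$ together.
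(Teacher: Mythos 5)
Your proposal is exactly the paper's argument: the paper offers no separate proof of Corollary~\ref{corollary.RBIBD and Hadamard imply QSD}, instead stating in the preceding paragraph that one applies Theorem~\ref{theorem.QSD} to the real flat Kirkman ETF of~\cite{JasperMF14} with $(d,n)=(\hat{b},\hat{v}(\hat{r}+1))$ and simplifies, which is precisely your route. Your algebra checks out, including the key simplifications $b+1-v=\tfrac{\hat{v}}{\hat{k}}(\hat{v}+\hat{k}-1)$ and $(\hat{r}-1)(\hat{v}+\hat{k}-1)=\hat{k}(\hat{b}-1)$, both following from $\hat{b}\hat{k}=\hat{v}\hat{r}$ and $\hat{v}-1=\hat{r}(\hat{k}-1)$.
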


Moreover, using a class of RBIBDs that were overlooked in~\cite{JasperMF14},
one can show that instances of such ETFs with $\hat{k}>2$ exist.
To be precise, for any positive integer $i$ and any prime power $q$,
the finite projective space $\operatorname{PG}(2^{i+1}-1,q)$ is resolvable~\cite{Beutelspacher74}
and so is an RBIBD with parameters
\begin{equation*}
(\hat{v},\hat{k},\hat{\lambda},\hat{r},\hat{b})
=(\tfrac{q^{2^{i+1}}-1}{q-1},\,q+1,\,1,\,\tfrac{q^{2^{i+1}-1}-1}{q-1},\,\tfrac{(q^2)^{2^i}-1}{q^2-1}\tfrac{q^{2^{i+1}-1}-1}{q-1}).
\end{equation*}
In order for Hadamard matrices of size \smash{$\frac{\hat{v}}{\hat{k}}$} and $\hat{r}+1$ to exist,
these quantities are necessarily divisible by $4$, which happens when $i\geq 2$ and $q\equiv 1\bmod 4$.
In particular, taking $i=2$ and $q=5$ gives an $\RBIBD(97656,6,1,19531,317886556)$;
since Hadamard matrices of size \smash{$\frac{\hat{v}}{\hat{k}}=16276$} and $r+1=19532$ exist~\cite{CraigenK07},
this implies the existence of a real flat Kirkman ETF with parameters $(317886556,1907416992)$.
Though these parameters are admittedly large, this to our knowledge is the first example of a real flat ETF whose parameters are not of the form $(u(2u-1),4u^2)$ or $(u(2u+1),4u^2)$.
Applying Theorem~\ref{theorem.QSD} to this ETF \`{a} la Corollary~\ref{corollary.RBIBD and Hadamard imply QSD} gives a new QSD with parameters
$(317886556,158943278,476829831,953659665,1907416991,79459432,79467570),$
which is not of the form~\eqref{equation.special QSD parameters -} or~\eqref{equation.special QSD parameters +}.
Applying Theorem~A of~\cite{McGuire97} to this QSD or alternatively Theorem~3 of~\cite{JasperMF14} to this ETF gives a new code that achieves equality in the Grey-Rankin bound.

It is also noteworthy that Theorem~\ref{theorem.QSD} sometimes yields non-flat real ETFs with the same $(d,n)$ parameters as flat ones.
For example, one can show that QSD parameters $(v,k,\lambda,r,b,x,y)$ with $\lambda=1$ satisfy~\eqref{equation.QSD parameters} if and only if $v=2k^2-k$.
An infinite number of such QSDs exist,
being instances of known $\BIBD(2k^2-k,k,1,2k+1,4k^2-1)$.
This includes whenever $k=2^e$ for some $e\geq 1$---a type of \textit{Denniston design}---and also whenever $k=3,5,6,7$~\cite{MathonR07}.
For any such design with $k>2$, applying Theorem~\ref{theorem.QSD} to it yields a non-flat ETF with parameters $(d,n)=(2k^2-k,4k^2)$ and entries valued either $1$, $\delta=\frac1{2k-1}(1\pm\sqrt{2k})$ or $\delta+\varepsilon=\frac1{2k-1}[1\mp(2k-2)\sqrt{2k}]$.
At the same time, taking $u=k$ in Corollary~\ref{corollary.existence of real flat} gives a real flat ETF that also has $(d,n)=(2k^2-k,4k^2)$ whenever there exists a Hadamard matrix of size $k$ or when $k=6$.
In particular, there are an infinite number of instances in which two different QSDs give two different ETFs---one flat and the other not---with the same $(d,n)$ parameters.
We also see here that when $k=3,5,7$, Theorem~\ref{theorem.QSD} produces ETFs with $(d,n)$ being $(15,36)$, $(45,100)$ and $(91,196)$, respectively;
since these values of $d$ are odd, these ETFs cannot be flat.

\subsection{Necessary integrality conditions for Hadamard ETFs}

We conclude this section by strengthening the necessary conditions on the existence of real flat ETFs given in Theorem~\ref{theorem.QSD} by combining them with other known necessary conditions on real and unital ETFs in the literature, specifically those given in~\cite{SustikTDH07}:
\begin{corollary}
If $n-1>d>1$ and there exists a real flat ETF with parameters $(d,n)$, then
\begin{equation}
\label{eqref.integrality conditions}
\bigbracket{\tfrac{d(n-1)}{n-d}}^{\frac12},
\quad
\bigbracket{\tfrac{(n-d)(n-1)}{n-d}}^{\frac12},
\quad
\bigbracket{\tfrac{d(n-d)}{n-1}}^{\frac12},
\end{equation}
are integers, and are necessarily odd, odd and even, respectively.
Moreover, $n$ is divisible by $16$.
\end{corollary}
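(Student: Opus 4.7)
The plan is to combine Theorem~\ref{theorem.QSD} with classical spectral/integrality facts for real ETFs, then perform a short $2$-adic bootstrap to reach divisibility by $16$. To streamline notation I write $A := [d(n-1)/(n-d)]^{1/2}$, $B := [(n-d)(n-1)/d]^{1/2}$, and $C := [d(n-d)/(n-1)]^{1/2}$ (taking the middle quantity of \eqref{eqref.integrality conditions} as a typo for $B$, since $\sqrt{n-1}$ cannot be an integer when $4 \mid n$). A direct calculation gives $AC = d$, $BC = n - d$, $AB = n - 1$, and therefore the key identity $n = C(A+B)$.

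First I would deduce integrality. Since $n - 1 > d > 1$, Theorem~\ref{theorem.QSD} supplies a corresponding QSD and tells us that $C = w$ is an even positive integer and that $n$ is divisible by $4$. For $A$ and $B$, I would argue as in~\cite{SustikTDH07}: the signature matrix $\bfS := C^{-1}(\bfPhi^\rmT \bfPhi - d\bfI)$ has zero diagonal and $\pm 1$ off-diagonal entries (because $\bfPhi$ is $\pm 1$-valued), so it is an integer matrix; its only eigenvalues are $B$ (multiplicity $d$) and $-A$ (multiplicity $n-d$), so they are rational algebraic integers and therefore integers. Since $AB = n - 1$ is odd, both $A$ and $B$ are odd; and $AB \equiv -1 \equiv 3 \pmod 4$ together with $A, B$ odd forces $\{A \bmod 4, B \bmod 4\} = \{1, 3\}$, whence $A + B \equiv 0 \pmod 4$. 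This settles the three parity claims.

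For the remaining assertion that $16 \mid n$, I would use $n = C(A+B)$ in two bootstrap stages. Writing $v_2$ for the $2$-adic valuation, the established facts give $v_2(C) \geq 1$ and $v_2(A+B) \geq 2$, so $v_2(n) \geq 3$; in other words $8 \mid n$, i.e., $AB = n - 1 \equiv 7 \pmod 8$. Enumerating the ordered pairs $(A, B)$ of odd residues modulo $8$ whose product is $7 \pmod 8$ leaves only $(1,7), (3,5), (5,3), (7,1)$, and in every case $A + B \equiv 0 \pmod 8$. Therefore $v_2(A+B) \geq 3$ and $v_2(n) = v_2(C) + v_2(A+B) \geq 1 + 3 = 4$, proving $16 \mid n$. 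The step I expect to be subtlest is precisely this bootstrap: the mild divisibilities $2 \mid C$ and $4 \mid (A+B)$ compose to force $AB \equiv 7 \pmod 8$, which is exactly the congruence needed to upgrade $v_2(A+B)$ from $2$ to $3$ and thereby supply the missing factor of $2$ in $n$.
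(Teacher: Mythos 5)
Your proposal is correct, and its final step takes a genuinely different route from the paper's. The paper's proof is citation-driven: it first rules out $n=2d$ with a small computation (needed because Theorem~A of~\cite{SustikTDH07} excludes that case), then quotes Theorems~A and~B of~\cite{SustikTDH07} for the integrality and oddness of the first two quantities and the integrality of the third, identifies the third quantity with the parameter $w$ of Theorem~\ref{theorem.QSD} to get evenness, and finally proves $16\mid n$ by reducing the odd square $\tfrac{d(n-1)}{n-d}$ modulo $16$ (it is $\equiv 1$ or $9$) and running a two-case congruence computation that uses the evenness of $d$. You differ in two ways. First, you re-derive integrality inline rather than citing it: flatness forces every off-diagonal Gram entry to be an integer of modulus $C$, so $C$ is a positive integer (positive since $C=0$ would make $n>d$ columns orthogonal in $\bbR^d$), the signature matrix $\bfS=C^{-1}(\bfPhi^\rmT\bfPhi-d\bfI)$ is integral, and its eigenvalues $B=(n-d)/C$ and $-A=-d/C$ are rational algebraic integers, hence integers. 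This is the technique of~\cite{SustikTDH07}, but flatness short-circuits the rationality subtleties there, and --- notably --- your version needs no $n\neq 2d$ hypothesis, so you never need the paper's separate exclusion of that case: $n=2d$ would force $A=B$, which is incompatible with your congruence $AB=n-1\equiv 3\bmod 4$, so it is eliminated automatically. Second, for $16\mid n$ you replace the paper's mod-$16$ case analysis with the exact identity $n=C(A+B)$ (which follows from $AC=d$, $BC=n-d$, all verified) and a two-stage $2$-adic bootstrap: $2\mid C$ and $4\mid(A+B)$ give $8\mid n$, whence $AB\equiv 7\bmod 8$, and your enumeration of odd pairs with product $7$ modulo $8$ is exhaustive and does give $8\mid(A+B)$ in every case, yielding $v_2(n)\geq 4$. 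Your reading of the middle quantity in~\eqref{eqref.integrality conditions} as a typo for $[\tfrac{(n-d)(n-1)}{d}]^{\frac12}$ is also right: as printed it equals $\sqrt{n-1}$, which cannot be a square once $4\mid n$, and the corrected form is exactly the quantity in Theorem~A of~\cite{SustikTDH07}. The trade-off: the paper's argument is shorter on the page given its citations, while yours is self-contained, handles $n=2d$ uniformly, and exposes the structural identity $n=C(A+B)$, which makes transparent why the bootstrap terminates at $16$ (modulo $16$ the pair $(3,5)$ has $A+B\equiv 8$, consistent with the existing $(6,16)$ example where $C=2$, $A+B=8$).
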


\begin{proof}
Under these assumptions we have $n\neq 2d$: otherwise Theorem~\ref{theorem.QSD} gives that
\begin{equation*}
4w^2
=\tfrac{4d(n-d)}{n-1}
=\tfrac{4d^2}{2d-1}
=2d+1+\tfrac1{2d-1}
\end{equation*}
is an integer divisible by $16$, implying $d=1$.
Since $n-1>d>1$ where $n\neq 2d$,
Theorem~A of~\cite{SustikTDH07} gives that the first two quantities in~\eqref{eqref.integrality conditions} are indeed odd integers.
Moreover, Theorem~B of~\cite{SustikTDH07} gives that the third quantity in~\eqref{eqref.integrality conditions} is an integer.
In fact, in the notation of Theorem~\ref{theorem.QSD}, this third quantity is the parameter $w$, and so is necessarily even.
(Thus Theorem~\ref{theorem.QSD} shows, for example, that real ETFs with parameters $(d,n)=(15,36)$ cannot be flat, despite satisfying all necessary conditions on such ETFs given in~\cite{SustikTDH07}.)
For the final conclusion,
note that being an odd square, \smash{$\frac{d(n-1)}{n-d}$} is congruent to either $1$ or $9$ modulo $16$.
In the first case, we thus have $d(n-1)\equiv n-d\bmod 16$ and so $(d-1)n\equiv0\bmod 16$;
since $d$ is even, $d-1$ is a unit in $\bbZ_{16}$, implying $n\equiv 0\bmod 16$.
Similarly, in the second case we have $d(n-1)\equiv 9(n-d)\bmod 16$ and so $(d-9)n\equiv8d \bmod 16$;
since $d$ is even, this again implies $n\equiv0\bmod16$.
\end{proof}

\section{Miscellanea}

In this section, we present two other results regarding Hadamard ETFs.
The first of these results show how, in a special case, we can take tensor products of two Hadamard ETFs to produce another.
The second result generalizes the well-known Gerzon bound to provide new necessary conditions on such ETFs.

In Definition~\ref{definition.flat}, we define a Hadamard ETF to be an ETF whose synthesis operator is a submatrix of a Hadamard matrix.
One may also consider ETFs whose Gram matrices are related to Hadamard matrices.
In particular, there is a well-known equivalence between symmetric Hadamard ETFs with constant diagonal and real ETFs whose parameters $(d,n)$ satisfy $d=\frac12(n-\sqrt{n})$~\cite{HolmesP04};
see~\cite{BrouwerH12} for a review of the literature of such Hadamard matrices.
This idea has also been generalized to the complex setting~\cite{BodmannPT09,Szollosi13}.

Here, the idea is that if \smash{$\set{\bfphi_j}_{j=1}^{n}$} is an ETF for $\bbF^d$ with $d=\frac12(n-\sqrt{n})$ and if we assume without loss of generality that \smash{$\norm{\bfphi_j}^2=\frac{2d}{\sqrt{n}}$} for all $j$,
then $\bfH=\sqrt{n}\,\bfI-\bfPhi^*\bfPhi$ is a self-adjoint possibly-complex Hadamard matrix whose diagonal entries are one.
Indeed, the diagonal entries of $\bfH$ are \smash{$\sqrt{n}-\frac{2d}{\sqrt{n}}=1$},
and the off-diagonal entries have modulus \smash{$\frac{2d}{\sqrt{n}}[\frac{n-d}{d(n-1)}]^{\frac12}=1$}.
At the same time, the fact that \smash{$\bfPhi\bfPhi^*=\frac{n}{d}\frac{2d}{\sqrt{n}}\bfI=2\sqrt{n}\,\bfI$} implies that the eigenvalues of $\bfH$ are $\pm\sqrt{n}$, implying $\bfH\bfH^*=n\bfI$.
Conversely, if $\bfH$ is any such Hadamard matrix,
then $\bfG=\sqrt{n}\,\bfI-\bfH$ is positive-semidefinite having eigenvalues $0$ and $2\sqrt{n}$ and trace $n(\sqrt{n}-1)$.
Diagonalizing $\bfG$ thus reveals it to be the Gram matrix of an ETF for $\bbF^d$ where $d$ is the multiplicity of $2\sqrt{n}$, namely $d=\frac12(n-\sqrt{n})$.

Given any two self-adjoint possibly-complex Hadamard matrices whose diagonal entries are one,
we can take their tensor product to construct another such matrix.
As noted in~\cite{BodmannPT09,Szollosi13,GoyenecheT16},
this fact along with the above equivalence implies that if there exists ETFs \smash{$\set{\bfphi_j}_{j=1}^{n_1}$} and \smash{$\set{\bfpsi_j}_{j=1}^{n_2}$} for $\bbF^{d_1}$ and $\bbF^{d_2}$, respectively, where $d_1=\frac12(n_1-\sqrt{n_1})$ and $d_2=\frac12(n_2-\sqrt{n_2})$,
then there exists an ETF consisting of $n_1n_2$ vectors for $\bbF^{d_3}$ where $d_3=\frac12(n_1n_2-\sqrt{n_1n_2})$.
We now present an alternative proof of this fact that constructs the synthesis operator of the resulting ETF explicitly.
In the special case where \smash{$\set{\bfphi_j}_{j=1}^{n_1}$} and \smash{$\set{\bfpsi_j}_{j=1}^{n_2}$} are Hadamard ETFs,
this construction implies that the $n_1n_2$-vector ETF is as well.

\begin{theorem}
\label{theorem.tensor product ETF}
Let \smash{$\set{\bfphi_j}_{j=1}^{n_1}$} and \smash{$\set{\bfpsi_j}_{j=1}^{n_2}$} be ETFs for $\bbF^{d_1}$ and $\bbF^{d_2}$, respectively, where the ETF parameters satisfy $d_1=\frac12(n_1-\sqrt{n_1})$ and $d_2=\frac12(n_2-\sqrt{n_2})$.
Let \smash{$\set{\widetilde{\bfphi}_j}_{j=1}^{n_1}$} and \smash{$\set{\widetilde{\bfpsi}_j}_{j=1}^{n_2}$} be any Naimark complements of \smash{$\set{\bfphi_j}_{j=1}^{n_1}$} and \smash{$\set{\bfpsi_j}_{j=1}^{n_2}$} in $\bbF^{n_1-d_1}$ and $\bbF^{n_2-d_2}$, respectively.
Then
\begin{equation}
\label{equation.tensor product ETF}
\set{(\bfphi_j\otimes\widetilde{\bfpsi}_{j'})\oplus(\widetilde{\bfphi}_j\otimes\bfpsi_{j'})}_{j=1,}^{n_1}\,_{j'=1}^{n_2},
\quad
\set{(\bfphi_j\otimes\bfpsi_{j'})\oplus(\widetilde{\bfphi}_j\otimes\widetilde{\bfpsi}_{j'})}_{j=1,}^{n_1}\,_{j'=1}^{n_2},
\end{equation}
are Naimark complementary ETFs for $\bbF^{d_3}$ and $\bbF^{n_1n_2-d_3}$, respectively, where $d_3=\frac12(n_1n_2-\sqrt{n_1n_2})$.
In particular, if \smash{$\set{\bfphi_j}_{j=1}^{n_1}$} and \smash{$\set{\bfpsi_j}_{j=1}^{n_2}$} are Hadamard, then so are the ETFs in~\eqref{equation.tensor product ETF}.
\end{theorem}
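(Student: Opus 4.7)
The plan is to verify directly that the two frames in~\eqref{equation.tensor product ETF} are equiangular, tight, and mutually Naimark complementary, and then to see that the construction preserves the property of being a submatrix of a Hadamard matrix. As a first step, I would rescale the four given frames to the normalization used at the opening of this section, namely $\norm{\bfphi_j}^2=\sqrt{n_1}-1$, $\norm{\widetilde\bfphi_j}^2=\sqrt{n_1}+1$, $\abs{\ip{\bfphi_j}{\bfphi_{j'}}}=1$ for $j\neq j'$, and analogously with $n_2$ for $\bfpsi,\widetilde\bfpsi$; this is exactly the normalization under which $\sqrt{n_i}\bfI-\bfPhi^*\bfPhi$ becomes a unit-diagonal self-adjoint Hadamard matrix. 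The Naimark identity $\widetilde\bfPhi^*\widetilde\bfPhi=\alpha\bfI-\bfPhi^*\bfPhi$ then supplies the key sign flip $\ip{\widetilde\bfphi_j}{\widetilde\bfphi_{j'}}=-\ip{\bfphi_j}{\bfphi_{j'}}$ for $j\neq j'$, and similarly for $\widetilde\bfpsi$.

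Writing $\mathbf{v}_{j,j'}=(\bfphi_j\otimes\widetilde\bfpsi_{j'})\oplus(\widetilde\bfphi_j\otimes\bfpsi_{j'})$ for the vectors of the first family, bilinearity of $\otimes$ and $\oplus$ gives
\begin{equation*}
\ip{\mathbf{v}_{j,j'}}{\mathbf{v}_{k,k'}}
=\ip{\bfphi_j}{\bfphi_k}\ip{\widetilde\bfpsi_{j'}}{\widetilde\bfpsi_{k'}}+\ip{\widetilde\bfphi_j}{\widetilde\bfphi_k}\ip{\bfpsi_{j'}}{\bfpsi_{k'}},
\end{equation*}
which I would evaluate in the four cases $j\stackrel{?}{=}k$ crossed with $j'\stackrel{?}{=}k'$. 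The diagonal case collapses after cancellation to $2(\sqrt{n_1 n_2}-1)$, and the sign flip reduces each of the three off-diagonal cases to a scalar multiple of $\ip{\bfpsi_{j'}}{\bfpsi_{k'}}$, of $\ip{\bfphi_j}{\bfphi_k}$, or of their product, each of modulus $2$. The resulting coherence $\tfrac{1}{\sqrt{n_1 n_2}-1}$ is exactly the Welch bound for the pair $(d_3,n_1 n_2)$, which once tightness is checked forces the frame to be an ETF in $\bbF^{d_3}$; a brief arithmetic check confirms $d_1(n_2-d_2)+(n_1-d_1)d_2=d_3$. Tightness itself is a block computation on the synthesis operator $\bfV$: the off-diagonal blocks of $\bfV\bfV^*$ vanish because $\bfPhi\widetilde\bfPhi^*=\bfzero$ and $\bfPsi\widetilde\bfPsi^*=\bfzero$, while the diagonal blocks $\bfPhi\bfPhi^*\otimes\widetilde\bfPsi\widetilde\bfPsi^*$ and $\widetilde\bfPhi\widetilde\bfPhi^*\otimes\bfPsi\bfPsi^*$ both equal $4\sqrt{n_1 n_2}\bfI$.

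The same template, applied to $\mathbf{w}_{j,j'}=(\bfphi_j\otimes\bfpsi_{j'})\oplus(\widetilde\bfphi_j\otimes\widetilde\bfpsi_{j'})$, produces common squared norm $2(\sqrt{n_1 n_2}+1)$, off-diagonals of modulus $2$, and tightness constant $4\sqrt{n_1 n_2}$ for an ETF in $\bbF^{n_1 n_2-d_3}$. Mutual Naimark complementarity $\bfV\bfW^*=\bfzero$ is again a $2\times 2$ block identity, each of whose four blocks carries a factor of one of $\bfPhi\widetilde\bfPhi^*$, $\widetilde\bfPhi\bfPhi^*$, $\bfPsi\widetilde\bfPsi^*$, or $\widetilde\bfPsi\bfPsi^*$, and so vanishes.

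For the Hadamard conclusion, suppose $\bfPhi$ is the row-submatrix of a possibly-complex size-$n_1$ Hadamard matrix $\bfH_1$ indexed by $S_1\subset\set{1,\dotsc,n_1}$, and similarly $\bfPsi$ is taken from $\bfH_2$ using $S_2$; choosing $\widetilde\bfPhi$ and $\widetilde\bfPsi$ to consist of the remaining rows yields valid Naimark complements with no rescaling required, since within a Hadamard matrix all rows have equal norm. Then $\bfH_1\otimes\bfH_2$ is a possibly-complex Hadamard matrix of size $n_1 n_2$, and after an appropriate permutation the two synthesis operators constructed above are precisely the row-submatrices of $\bfH_1\otimes\bfH_2$ indexed by $(S_1\times S_2^c)\cup(S_1^c\times S_2)$ and $(S_1\times S_2)\cup(S_1^c\times S_2^c)$. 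The only step that really requires care is the four-case inner-product calculation, which the Naimark sign flip trivializes; everything else is routine block arithmetic on tensor products.
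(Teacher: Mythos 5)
Your proposal is correct and follows essentially the same route as the paper's proof: a Kronecker-block verification that $\bfV\bfV^*$, $\bfW\bfW^*$ and $\bfV\bfW^*$ give tightness and mutual orthogonality of row spaces (with $d_3+d_4=n_1n_2$ closing the Naimark argument), together with the four-case inner-product analysis driven by the sign flip $\ip{\widetilde{\bfphi}_j}{\widetilde{\bfphi}_k}=-\ip{\bfphi_j}{\bfphi_k}$ for $j\neq k$, the only cosmetic difference being your normalization $\norm{\bfphi_j}^2=\sqrt{n_1}-1$ (tightness constant $4\sqrt{n_1n_2}$) versus the paper's $\norm{\bfphi_j}^2=d_1$ (tightness constant $n_1n_2$), and all of your computed values ($2(\sqrt{n_1n_2}\mp1)$ diagonals, modulus-$2$ off-diagonals, $d_1(n_2-d_2)+(n_1-d_1)d_2=d_3$) check out. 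As a minor bonus, you verify equiangularity of both families where the paper checks only the second and lets Naimark complementarity handle the first, and you make explicit the concluding Hadamard claim---choosing the complements to be the remaining rows of $\bfH_1$, $\bfH_2$ so the two synthesis operators become the row-submatrices of $\bfH_1\otimes\bfH_2$ indexed by $(S_1\times S_2^c)\cup(S_1^c\times S_2)$ and $(S_1\times S_2)\cup(S_1^c\times S_2^c)$---a step the paper's proof leaves implicit.
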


\begin{proof}
Without loss of generality, \smash{$\norm{\bfphi_j}^2=d_1$}, \smash{$\norm{\bfpsi_j}^2=d_2$}, \smash{$\norm{\widetilde{\bfphi}_j}^2=n_1-d_1$},
and \smash{$\norm{\widetilde{\bfpsi}_j}^2=n_2-d_2$} for all $j$.
Letting $\bfPhi$, $\bfPsi$, $\widetilde{\bfPhi}$ and $\widetilde{\bfPsi}$
denote the synthesis operators for \smash{$\set{\bfphi_j}_{j=1}^{n_1}$}, \smash{$\set{\bfpsi_j}_{j=1}^{n_2}$} and their Naimark complements, respectively,
we have that
\begin{equation}
\label{equation.proof of tensor product ETF 1}
\bfPhi\bfPhi^*=n_1\bfI,
\quad
\widetilde{\bfPhi}\widetilde{\bfPhi}^*=n_1\bfI,
\quad
\bfPhi\widetilde{\bfPhi}^*=\bfzero,
\quad
\bfPsi\bfPsi^*=n_2\bfI,
\quad
\widetilde{\bfPsi}\widetilde{\bfPsi}^*=n_2\bfI,
\quad
\bfPsi\widetilde{\bfPsi}^*=\bfzero.
\end{equation}
The synthesis operators of the vector sequences in~\eqref{equation.tensor product ETF} are the
$d_3\times n_1n_2$ and $d_4\times n_1n_2$ matrices
\begin{equation*}
\left[\begin{array}{c}
\bfPhi\otimes\widetilde{\bfPsi}\\
\widetilde{\bfPhi}\otimes\bfPsi
\end{array}\right],
\quad
\left[\begin{array}{c}
\bfPhi\otimes\bfPsi\\
\widetilde{\bfPhi}\otimes\widetilde{\bfPsi}
\end{array}\right],
\end{equation*}
respectively,
where the fact that $d_1=\frac12(n_1-\sqrt{n_1})$ and $d_2=\frac12(n_2-\sqrt{n_2})$ implies
\begin{align*}
d_3
&=d_1(n_2-d_2)+(n_1-d_1)d_2
=\tfrac12(n_1n_2-\sqrt{n_1n_2}),\\
d_4
&=d_1d_2+(n_1-d_1)(n_2-d_2)
=\tfrac12(n_1n_2+\sqrt{n_1n_2}).
\end{align*}
Here,~\eqref{equation.proof of tensor product ETF 1} implies
\begin{equation*}
\setlength{\arraycolsep}{3pt}
\left[\begin{array}{c}
\bfPhi\otimes\widetilde{\bfPsi}\\
\widetilde{\bfPhi}\otimes\bfPsi\\
\bfPhi\otimes\bfPsi\\
\widetilde{\bfPhi}\otimes\widetilde{\bfPsi}
\end{array}\right]
\left[\begin{array}{c}
\bfPhi\otimes\widetilde{\bfPsi}\\
\widetilde{\bfPhi}\otimes\bfPsi\\
\bfPhi\otimes\bfPsi\\
\widetilde{\bfPhi}\otimes\widetilde{\bfPsi}
\end{array}\right]^*
=\left[\begin{array}{cccc}
\bfPhi\bfPhi^*\otimes\widetilde{\bfPsi}\widetilde{\bfPsi}^*&\bfPhi\widetilde{\bfPhi}^*\otimes\widetilde{\bfPsi}\bfPsi^*&\bfPhi\bfPhi^*\otimes\widetilde{\bfPsi}\bfPsi^*&\bfPhi\widetilde{\bfPhi}^*\otimes\widetilde{\bfPsi}\widetilde{\bfPsi}^*\\
\widetilde{\bfPhi}\bfPhi^*\otimes\bfPsi\widetilde{\bfPsi}^*&\widetilde{\bfPhi}\widetilde{\bfPhi}^*\otimes\bfPsi\bfPsi^*&\widetilde{\bfPhi}\bfPhi^*\otimes\bfPsi\bfPsi^*&\widetilde{\bfPhi}\widetilde{\bfPhi}^*\otimes\bfPsi\widetilde{\bfPsi}^*\\
\bfPhi\bfPhi^*\otimes\bfPsi\widetilde{\bfPsi}^*&\bfPhi\widetilde{\bfPhi}^*\otimes\bfPsi\bfPsi^*&\bfPhi\bfPhi^*\otimes\bfPsi\bfPsi^*&\bfPhi\widetilde{\bfPhi}^*\otimes\bfPsi\widetilde{\bfPsi}^*\\
\widetilde{\bfPhi}\bfPhi^*\otimes\widetilde{\bfPsi}\widetilde{\bfPsi}^*&\widetilde{\bfPhi}\widetilde{\bfPhi}^*\otimes\widetilde{\bfPsi}\bfPsi^*&\widetilde{\bfPhi}\bfPhi^*\otimes\widetilde{\bfPsi}\bfPsi^*&\widetilde{\bfPhi}\widetilde{\bfPhi}^*\otimes\widetilde{\bfPsi}\widetilde{\bfPsi}^*\\
\end{array}\right]
=n_1n_2\bfI.
\end{equation*}
Since $d_3+d_4=n_1n_2$, this shows that~\eqref{equation.tensor product ETF} indeed defines Naimark complementary tight frames.
What remains is to show that one of these two sequences of vectors is equiangular.
For the second sequence in particular, Naimark complementarity implies
\begin{align*}
&\ip{(\bfphi_j\otimes\bfpsi_{j'})\oplus(\widetilde{\bfphi}_j\otimes\widetilde{\bfpsi}_{j'})}{(\bfphi_{j''}\otimes\bfpsi_{j'''})\oplus(\widetilde{\bfphi}_{j''}\otimes\widetilde{\bfpsi}_{j'''})}\\
&\quad=\ip{\bfphi_j}{\bfphi_{j''}}\ip{\bfpsi_{j'}}{\bfpsi_{j'''}}
+\ip{\widetilde{\bfphi}_j}{\widetilde{\bfphi}_{j''}}\ip{\widetilde{\bfpsi}_{j'}}{\widetilde{\bfpsi}_{j'''}}\\
&\quad=\ip{\bfphi_j}{\bfphi_{j''}}\ip{\bfpsi_{j'}}{\bfpsi_{j'''}}
+\left\{\begin{array}{rl}
n_1-\ip{\bfphi_j}{\bfphi_{j''}},& j=j''\\
   -\ip{\bfphi_j}{\bfphi_{j''}},& j\neq j''
\end{array}\right\}
\left\{\begin{array}{rl}
n_2-\ip{\bfpsi_{j'}}{\bfpsi_{j'''}},&  j'=j'''\\
   -\ip{\bfpsi_{j'}}{\bfpsi_{j'''}},&j'\neq j'''
\end{array}\right\}\\
&\quad=\left\{\begin{array}{cl}
d_1d_2+(n_1-d_1)(n_2-d_2),& \ j=j'',j'=j''',\smallskip\\
-(n_1-2d_1)\ip{\bfpsi_{j'}}{\bfpsi_{j'''}},&\ j=j'', j'\neq j''',\smallskip\\
-\ip{\bfphi_j}{\bfphi_{j''}}(n_2-2d_2),&\ j\neq j'', j'=j''',\smallskip\\
2\ip{\bfphi_j}{\bfphi_{j''}}\ip{\bfpsi_{j'}}{\bfpsi_{j'''}},&\ j\neq j'', j'\neq j'''.
\end{array}\right.
\end{align*}
As such, this sequence is equiangular if
$\abs{\ip{\bfphi_j}{\bfphi_{j''}}}=\frac12(n_1-2d_1)=\frac12\sqrt{n_1}$ for all $j\neq j''$ and $\abs{\ip{\bfpsi_{j'}}{\bfpsi_{j'''}}}=\frac12(n_2-2d_2)=\frac12\sqrt{n_2}$ for all $j'\neq j'''$;
these hold since \smash{$\set{\bfphi_j}_{j=1}^{n_1}$}, \smash{$\set{\bfpsi_j}_{j=1}^{n_2}$} are equiangular and achieve the Welch bound with $d_1=\frac12(n_1-\sqrt{n_1})$ and $d_2=\frac12(n_2-\sqrt{n_2})$.
\end{proof}

As an example of the previous result, note that for $n_1=n_2=4$ and $d_1=d_2=\frac12(4-\sqrt{4})=1$,
we can take
$\bfPhi=\bfPsi=\left[\begin{array}{rrrr}1&1&1&1\end{array}\right]$
and take \smash{$\widetilde{\bfPhi}=\widetilde{\bfPsi}$} to be the $3\times 4$ matrix~\eqref{equation.3x4 real flat regular simplex} formed by the remaining three rows of the canonical $4\times 4$ Hadamard matrix.
Applying Theorem~\ref{theorem.tensor product ETF} then produces a Hadamard ETF with $(d,n)=(6,16)$:
\begin{align*}
\left[\begin{array}{c}
\bfPhi\otimes\widetilde{\bfPhi}\\
\widetilde{\bfPhi}\otimes\bfPhi
\end{array}\right]
=\left[\begin{array}{rrrrrrrrrrrrrrrr}
 1&-1& 1&-1& 1&-1& 1&-1& 1&-1& 1&-1& 1&-1& 1&-1\\
 1& 1&-1&-1& 1& 1&-1&-1& 1& 1&-1&-1& 1& 1&-1&-1\\
 1&-1&-1& 1& 1&-1&-1& 1& 1&-1&-1& 1& 1&-1&-1& 1\\
 1& 1& 1& 1&-1&-1&-1&-1& 1& 1& 1& 1&-1&-1&-1&-1\\
 1& 1& 1& 1& 1& 1& 1& 1&-1&-1&-1&-1&-1&-1&-1&-1\\
 1& 1& 1& 1&-1&-1&-1&-1&-1&-1&-1&-1& 1& 1& 1& 1
\end{array}\right].
\end{align*}
Repeatedly applying Theorem~\ref{theorem.tensor product ETF} to this and other resulting ETFs yields real flat ETFs with $d=\frac12(n-\sqrt{n})$ where $n=2^{2(e+1)}$ for any $e\geq 0$.
Though Hadamard ETFs of this size are already well-known,
this method of construction is trivial, making no use of the theory of difference sets or BIBDs.
In light of Corollary~\ref{corollary.existence of real flat}, one may also be tempted to apply Theorem~\ref{theorem.tensor product ETF} to real flat ETFs arising from MOLS.
This is challenging, since it seems the ETFs arising from the QSDs of~\cite{BrackenMW06} with parameters~\eqref{equation.special QSD parameters -} and~\eqref{equation.special QSD parameters +} are not Naimark complements by default.
Moreover, this has little payoff: at best, such a combination of Theorem~\ref{theorem.tensor product ETF} and Corollary~\ref{corollary.existence of real flat} gives a way to combine two QSDs with parameters~\eqref{equation.special QSD parameters -} for some even integers $u_1$, $u_2$ so as to produce another such QSD with $u_3=2u_1u_2$;
since $8$ divides $u_3$, such QSDs are probably more easily obtained via a $u_3\times u_3$ Hadamard matrix.

\subsection{The Gerzon bound for Hadamard ETFs}

It has long been known~\cite{LemmensS73} that if there exists $n$ equiangular noncollinear lines in $\bbR^d$ then we necessarily have $n\leq\binom{d+1}{2}$; in the complex case we necessarily have $n\leq d^2$.
An alternative proof of these facts is discussed in~\cite{Tropp05}.
In~\cite{HolmesP04} it is noted that in order for an ETF to exist its Naimark complement must also satisfy these restrictions, that is, we also need \smash{$n\leq\binom{n-d+1}{2}$} and $n\leq(n-d)^2$ in the real and complex cases, respectively.
Moreover, in the case of complex unital ETFs, this upper bound can be strengthened so as to require $n\leq d^2-d+1$~\cite{SustikTDH07}.
We now refine these ideas to obtain necessary conditions on the existence of possibly-complex Hadamard ETFs.

Given any $\set{\bfphi_j}_{j=1}^{n}$ in $\bbF^d$,
consider their outer products $\set{\bfphi_j^{}\bfphi_j^*}_{j=1}^{n}$ which lie in the real Hilbert space of all self-adjoint matrices in $\bbF^{d\times d}$.
The Frobenius inner product of any two such outer products is
$\ip{\bfphi_{j}^{}\bfphi_{j}^{*}}{\bfphi_{j'}^{}\bfphi_{j'}^{*}}_\Fro
=\Tr(\bfphi_{j}^{}\bfphi_{j}^{*}\bfphi_{j'}^{}\bfphi_{j'}^{*})
=\Tr(\bfphi_{j}^{*}\bfphi_{j'}^{}\bfphi_{j'}^{*}\bfphi_{j}^{})
=\abs{\ip{\bfphi_j}{\bfphi_{j'}}}^2$.
In particular, if $\set{\bfphi_j}_{j=1}^{n}$ is any sequence of noncollinear equiangular vectors, then the $n\times n$ Gram matrix of $\set{\bfphi_j^{}\bfphi_j^*}_{j=1}^{n}$ is $(\beta^2-\gamma^2)\bfI+\gamma^2\bfJ$ for some scalars $0\leq\gamma^2<\beta^2$.
Such a Gram matrix has rank $n$,
implying \smash{$\set{\bfphi_j^{}\bfphi_j^*}_{j=1}^{n}$} is linearly independent.
This implies that $n$ is at most the dimension of $\set{\bfB\in\bbF^{d\times d}: \bfB^*=\bfB}$, which is $\binom{d+1}{2}$ or $d^2$ depending on whether $\bbF=\bbR$ or $\bbF=\bbC$, respectively.

Now assume that $\set{\bfphi_j}_{j=1}^{n}$ is a flat ETF for $\bbF^d$ where $1<d<n-1$.
Here, $\set{\bfphi_j^{}\bfphi_j^*}_{j=1}^{n}$ is a sequence of linearly independent vectors in the subspace of \smash{$\set{\bfB\in\bbF^{d\times d}: \bfB^*=\bfB}$} that consists of all such matrices with constant diagonals.
Computing the dimensions of this subspace, we thus have $n\leq\tfrac12d^2-\tfrac12d+1$ or $n\leq d^2-d+1$ depending on whether $\bbF=\bbR$ or $\bbF=\bbC$, respectively.
If we further assume that $\set{\bfphi_j}_{j=1}^{n}$ is possibly-complex Hadamard,
then letting $\set{\widetilde{\bfphi}_j}_{j=1}^{n}$ in $\bbF^{n-d}$ be a flat Naimark complement for it,
we necessarily have $n\leq\tfrac12(n-d)^2-\tfrac12(n-d)+1$ or $n\leq(n-d)^2-(n-d)+1$ when $\bbF=\bbR$ or $\bbF=\bbC$, respectively.
Solving for $n$ in these inequalities gives the following result:

\begin{theorem}
Let $\set{\bfphi_j}_{j=1}^{n}$ be an ETF for $\bbF^d$ where $1<d<n-1$.
\begin{enumerate}
\renewcommand{\labelenumi}{(\alph{enumi})}
\item
If $\bbF=\bbC$ and $\set{\bfphi_j}_{j=1}^{n}$ is flat then $n\leq d^2-d+1$.
\item
If $\bbF=\bbC$ and $\set{\bfphi_j}_{j=1}^{n}$ is complex Hadamard then
$d+d^{\frac12}+1\leq n\leq d^2-d+1$.
\item
If $\bbF=\bbR$ and $\set{\bfphi_j}_{j=1}^{n}$ is flat then $n\leq\tfrac12d^2-\tfrac12d+1$;
\item
If $\bbF=\bbR$ and $\set{\bfphi_j}_{j=1}^{n}$ is Hadamard then
$d+(2d+\tfrac14)^{\frac12}+\tfrac32\leq n\leq \tfrac12d^2-\tfrac12d+1$.
\end{enumerate}
\end{theorem}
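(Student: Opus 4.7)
The plan is to formalize the argument sketched in the paragraphs immediately preceding the theorem statement, and then to invert the resulting polynomial inequality to extract lower bounds for the Hadamard parts.

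First I would establish the linear independence of the outer products $\set{\bfphi_j^{}\bfphi_j^*}_{j=1}^{n}$. As the preceding discussion records, $\ip{\bfphi_j^{}\bfphi_j^*}{\bfphi_{j'}^{}\bfphi_{j'}^*}_\Fro = \abs{\ip{\bfphi_j}{\bfphi_{j'}}}^2$, so for an ETF (rescaled without loss of generality to $\norm{\bfphi_j}^2=1$) the Frobenius Gram matrix of $\set{\bfphi_j^{}\bfphi_j^*}_{j=1}^{n}$ is $(1-\gamma^2)\bfI + \gamma^2\bfJ$, where $\gamma<1$ is the coherence. This matrix is positive-definite (since $d<n-1$ forces $0<\gamma<1$), so the $n$ outer products are linearly independent.

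Second, I would locate the ambient subspace containing these outer products and count its real dimension. In general, $\bfphi_j^{}\bfphi_j^*$ lies in the real vector space of self-adjoint matrices in $\bbF^{d\times d}$, whose dimension is $\binom{d+1}{2}$ when $\bbF=\bbR$ and $d^2$ when $\bbF=\bbC$. If $\set{\bfphi_j}_{j=1}^{n}$ is flat, the diagonal of each $\bfphi_j^{}\bfphi_j^*$ is the constant $1$, so the outer products lie in the affine subspace of self-adjoint matrices whose diagonal is identically $1$; subtracting a fixed such matrix yields the linear subspace of self-adjoint matrices with \emph{zero} diagonal, whose dimension is $\binom{d+1}{2} - d = \tfrac12 d(d-1)$ in the real case and $d^2 - d$ in the complex case. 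Linear independence of $n$ vectors in an affine space of dimension $D$ forces $n\leq D+1$, giving $n\leq \tfrac12 d^2 - \tfrac12 d + 1$ in (c) and $n\leq d^2 - d + 1$ in (a).

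Third, for (b) and (d) I would invoke Proposition~\ref{proposition.Hadamard ETF characterization}: a (possibly-complex) Hadamard ETF has a flat Naimark complement $\set{\widetilde{\bfphi}_j}_{j=1}^{n}$ in $\bbF^{n-d}$. The hypothesis $1<d<n-1$ keeps the Naimark complement within the regime of parts (a)/(c), so applying them to $\set{\widetilde{\bfphi}_j}_{j=1}^{n}$ gives $n\leq (n-d)^2-(n-d)+1$ in the complex case and $n\leq\tfrac12(n-d)^2-\tfrac12(n-d)+1$ in the real case. Setting $m:=n-d$, the complex inequality rearranges to $(m-1)^2\geq d$, so $m\geq \sqrt{d}+1$, i.e.\ $n\geq d+d^{1/2}+1$. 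The real inequality rearranges to $m^2-3m+(2-2d)\geq 0$, so by the quadratic formula $m\geq \tfrac12(3+\sqrt{1+8d})=\tfrac32+(2d+\tfrac14)^{1/2}$, yielding $n\geq d+(2d+\tfrac14)^{1/2}+\tfrac32$.

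The argument is essentially a bookkeeping exercise on top of the classical Gerzon-type dimension count, so no step presents a genuine obstacle; the only care needed is in the dimension count for the ``constant-diagonal'' subspace (distinguishing the real and complex cases) and in correctly solving the resulting quadratic in $n-d$ to extract the lower bounds of (b) and (d).
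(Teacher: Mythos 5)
Your proposal is correct and follows essentially the same route as the paper: linear independence of the outer products $\set{\bfphi_j^{}\bfphi_j^*}_{j=1}^{n}$ via the positive-definite Frobenius Gram matrix, a dimension count for self-adjoint matrices with constant diagonal, and then Proposition~\ref{proposition.Hadamard ETF characterization} applied to a flat Naimark complement, solving the resulting quadratic in $n-d$. Your affine-subspace framing ($n\leq D+1$ with $D=\binom{d+1}{2}-d$ or $d^2-d$) is just an equivalent packaging of the paper's count of the linear subspace of constant-diagonal self-adjoint matrices, and your explicit quadratic-formula computations correctly fill in what the paper compresses into ``solving for $n$.''
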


We remark that the above bounds are achieved infinitely often in the complex setting:
for any prime power $q$, there exists a $d\times n$ harmonic ETF arising from a Singer difference set that has $d=q+1$ and $n=q^2+q+1$~\cite{StrohmerH03,XiaZG05}, meaning $n=d^2-d+1$; its $q^2\times (q^2+q+1)$ flat Naimark complement achieves equality in the lower bound.

\section*{Acknowledgments}
This work was partially supported by NSF DMS 1321779,
AFOSR F4FGA06060J007 and AFOSR Young Investigator Research Program award F4FGA06088J001.
The views expressed in this article are those of the authors and do not reflect the official policy or position of the United States Air Force, Department of Defense, or the U.S.~Government.

\end{document}